\definecolor{myurlcolor}{rgb}{0.1,0.1,0.8}
\definecolor{mylinkcolor}{rgb}{0.05,0.05,0.4}
\newtheorem{lemma}{Lemma}
\numberwithin{lemma}{section}
\newtheorem{proposition}[lemma]{Proposition}
\newtheorem{theorem}[lemma]{Theorem}
\theoremstyle{definition}
\newtheorem{definition}[lemma]{Definition}
\newtheorem{example}[lemma]{Example}
\newtheorem{remark}[lemma]{Remark}
  \newlength\squareheight
  \newcommand\squareslash{\tikz{\draw (0,0) rectangle (\squareheight,\squareheight);\draw(0,0) -- (\squareheight,\squareheight)}}
  \DeclareMathOperator\squarediv{\squareslash}
  \newlength\subsquareheight
\newcommand{\demph}[1]{\emph{#1}}
\newcommand{\xto}{\xrightarrow}
\newcommand{\bpc}{\mathrm{RC}}
\newcommand{\bph}{\mathrm{RH}}
\newcommand{\Pre}{\mathrm{Pre}}
\newcommand{\Nv}{\mathcal{N}}
\newcommand{\calF}{\mathcal{F}}
\newcommand{\calG}{\mathcal{G}}
\newcommand{\cn}[1]{\mathbf{#1}}
\newcommand{\cs}[1]{\mathbb{#1}}
\DeclareMathOperator{\sign}{sign}
\title[The reachability homology of a directed graph]{The reachability homology of a directed graph}
\author{Richard Hepworth}
\address{Institute of Mathematics\\ University of Aberdeen}
\email{r.hepworth@abdn.ac.uk}
\author{Emily Roff}
\address{Department of Mathematics\\ Osaka University}
\email{emily.roff@ed.ac.uk}
\subjclass[2020]{
Primary 
18G90, 
05C25; 
Secondary 
05C31, 
05C38, 
18G35
}
\keywords{}
\begin{document}

\begin{abstract}
    The last decade has seen the development of path homology
    and magnitude homology---two homology theories of directed
    graphs, each satisfying classic properties such 
    as K\"unneth and Mayer--Vietoris theorems.
    Recent work of Asao has shown that magnitude homology and path 
    homology are related, appearing in different
    pages of a certain spectral sequence.
    Here we study the target of that spectral sequence,
    which we call \emph{reachability homology}. We prove that it satisfies appropriate
    homotopy invariance, K\"unneth, excision, and Mayer--Vietoris
    theorems, these all being stronger than the corresponding 
    properties for either magnitude or path homology.
\end{abstract}

\maketitle



\section{Introduction}\label{sec:intro}

Recent years have seen the emergence of several novel homological invariants of directed graphs, and associated developments in the realm of discrete homotopy theory.
Among those invariants, \emph{path homology} and \emph{magnitude homology} in particular have each amassed a sizeable literature. 
Despite their separate evolution, these two theories were recently shown to be related 
by the so-called \emph{magnitude-path spectral sequence}.
This paper studies the {target} of that 
spectral sequence, which we call \emph{reachability homology},
and establishes its homological properties.
As well as introducing a further homology theory for graphs, 
this work advances the
study of the global structure and properties
of the magnitude-path spectral sequence.

The path homology of directed graphs was first defined by Grigor'yan, Lin, Muranov and Yau in~\cite{GLMY2013}, with cohomological precursors due to Dimakis and M\"uller-Hoissen in~\cite{DMH1,DMH2}. Path homology has been shown to satisfy a form of homotopy invariance \cite{PH-homotopy} and analogues of the other Eilenberg--Steenrod axioms \cite{PH-eilenbergsteenrod}, as well as K\"unneth theorems with respect to the box product and the join of directed graphs \cite{PH-kunneth}. A version of discrete Morse theory relevant to path homology has been developed in~\cite{PH-discretemorse}, and interesting analogues have been obtained, for graphs, of classical geometric results \cite{PH-curvature}.

Magnitude homology, meanwhile, was originally constructed to categorify a numerical invariant of graphs known as \emph{magnitude} \cite{LeinsterGraph}, in a manner analogous to the categorification of the Jones polynomial by Khovanov homology. It was first defined for (undirected) graphs by Hepworth and Willerton in \cite{HepworthWillerton2017}, where its homological properties were established. The magnitude homology of a graph is a stronger invariant than magnitude \cite{Gu}; it can contain torsion \cite{KanetaYoshinaga}; and it has been shown to capture geometric features including information about girth \cite{AsaoGirth}. The construction has been extended by Leinster and Shulman to encompass all metric spaces and a large class of enriched categories \cite{LeinsterShulman}. In the context of metric spaces it has been shown to capture information concerning curvature \cite{Asao-curvature}, convexity \cite{LeinsterShulman}, and the uniqueness of geodesics \cite{GomiGeodesic}; connections have also been established with persistent homology \cite{Otter, GovcHepworth}. Most recently, magnitude homology has been extended to certain higher categories and metric groups \cite{Roff2023}, and an associated `magnitude homotopy type' for metric spaces has been constructed \cite{TajimaYoshinaga}.

As this collection of references suggests, the theories of path homology and of magnitude homology have developed quite independently. Despite this, Asao demonstrated in 2022 that the two are in fact intimately related \cite{Asao-path}. To any directed graph one can associate a spectral sequence---the \emph{magnitude-path spectral sequence} or \emph{MPSS}---whose \(E^1\) page is exactly magnitude homology, while path homology can be identified with a single axis of page \(E^2\).

The subject of the present paper is the target object of the MPSS, which we name \emph{reachability homology}. 
It is the homology of the chain complex whose generators in degree~$k$
are tuples $(x_0,\ldots,x_k)$ of vertices with the property that there
is a directed path from each entry in the tuple to the next.
In other words, each entry in the tuple can \emph{reach} the next one. The reachability homology of a directed graph \(G\) is thus an invariant of the \emph{reachability relation} of \(G\): the preorder on the set of vertices in which \(u \leq v\) if there exists a directed path in \(G\) from \(u\) to \(v\). From any preorder one can construct a simplicial set known as the \emph{nerve}. The reachability homology of a directed graph is precisely the homology of the normalized complex of chains in the nerve of its reachability relation. (For details, see \Cref{sec:RH}.)

Determining the reachability relation of a directed graph allows one to determine its strongly connected components; algorithms to achieve this have been the subject of research in computer science and combinatorics since the 1970s, and remain so today \cite{AjtaiFagin, Kameda, Tarjan, TarjanZwick}. In a more algebraic direction, Caputi and Riihim\"aki have recently studied the reachability preorder in connection with path categories and path algebras of quivers \cite{CaputiRiihimaki}. As well as contributing to the developing understanding of the magnitude-path spectral sequence, the present paper offers a novel homological perspective that may eventually be brought to bear on practical questions of reachability and strong connectivity.

Our main results establish that reachability homology indeed deserves to be called
a homology theory, possessing the following characteristic 
properties:
\begin{itemize}
    \item
    It satisfies homotopy invariance with 
    respect to a relation on graph maps that we call \emph{long homotopy}.
    (See~\Cref{thm:long_htpy_inv}.)
    \item
    It satisfies a K\"unneth theorem with respect to both the
    cartesian product (the strong product) and the box product.
    (See~\Cref{thm:RH_kunneth}.)
    \item
    It satisfies excision and Mayer-Vietoris theorems
    for pushouts along a class of maps that we call
    \emph{long cofibrations}.
    (See~\Cref{thm:excision_MV_reach}.)
\end{itemize}
Thus, magnitude homology, path homology \emph{and} reachability homology
all deserve---by their properties---to be regarded as homology theories
of graphs.
However, these are certainly different theories
with different properties, 
even if we give some of these properties the same generic
names (such as K\"unneth or excision theorems).

To illustrate the differences, 
consider the following three graphs.
\[
    \begin{tikzpicture}
        \node (a) at (0:1) {$\bullet$};
        \node (c) at (120:1) {$\bullet$};
        \node (e) at (240:1) {$\bullet$};
    
        \draw[stealth-] (e) -- (a);
        \draw[stealth-] (a) -- (c);
        \draw[stealth-] (c) -- (e);
    
    \end{tikzpicture}
    \qquad\qquad
    \begin{tikzpicture}
        \node (a) at (0:1) {$\bullet$};
        \node (c) at (120:1) {$\bullet$};
        \node (e) at (240:1) {$\bullet$};
    
        \draw[stealth-stealth] (e) -- (a);
        \draw[stealth-stealth] (c) -- (e);
    
    \end{tikzpicture}
    \qquad\qquad
    \begin{tikzpicture}
        \node (a) at (0:1) {$\bullet$};
        \node (c) at (120:1) {$\bullet$};
        \node (e) at (240:1) {$\bullet$};
    
        \draw[stealth-stealth] (e) -- (a);
        \draw[stealth-stealth] (a) -- (c);
        \draw[stealth-stealth] (c) -- (e);
    
    \end{tikzpicture}
\]
Magnitude homology can distinguish all three graphs, for example because they each have different numbers of directed edges. Path homology can distinguish the first graph from the other two, but cannot distinguish the second and third from one another. (The second and third graphs are homotopy equivalent to a point in the sense appropriate to path homology, whereas the path homology of the first graph is nontrivial in degree $1$.) Reachability homology, meanwhile, identifies all three graphs, 
because all three are long homotopy equivalent to a point.

The magnitude-path spectral sequence offers a systematic account of these different points of view. Asao has shown that the $E^{r+1}$ page of the MPSS has a homotopy invariance property that holds when maps of directed graphs $f,g\colon G\to H$ are \emph{$r$-close} with respect to the shortest path metric~\cite[Section~4]{Asao-filtered}. In \cite{HepworthRoff2024} the present authors demonstrate that each page of the sequence possesses homological properties compatible with this ever-stronger homotopy invariance. Thus, the MPSS as a whole encompasses a \emph{spectrum} of homological perspectives on directed graphs, interpolating between magnitude homology and reachability homology.

Reachability homology, as the target object, controls the eventual behaviour of the MPSS, and has stronger homological properties than any of the individual pages. Meanwhile, its elementary construction aids in establishing those properties by making transparent its relationship to more classical homotopy-theoretic notions. (This is exemplified here by our proof of the excision theorem, which is made easy by exploiting a well known result in the homotopy theory of small categories.) Reachability homology therefore functions as a touchstone for the magnitude-path spectral sequence and, we hope, for those who study its pages.

\subsection*{Structure of the paper}

\Cref{sec:digraph_preord} describes basic properties of the reachability relation that will be relevant to our story. In \Cref{sec:nerve} we describe the nerve of a directed graph, which is used in \Cref{sec:RH} to construct the reachability complex and reachability homology. In \Cref{sec:RH_kunneth} we prove the K\"unneth theorem for reachability homology, and in \Cref{sec:RH_MV} we prove the excision theorem and associated Mayer--Vietoris theorem.

\subsection*{Acknowledgements}

We are grateful to Masahiko Yoshinaga and Sergei O.~Ivanov for helpful remarks. This work was partially supported by
JSPS Postdoctoral Fellowships for Research in Japan.


\section{The reachability relation}\label{sec:digraph_preord}

The topic of this paper is a homology theory for directed graphs that arises by regarding a graph as a preorder via the \emph{reachability relation} on its vertices. This section describes the reachability relation and outlines relationships between the categories of directed graphs and preorders that will be important in what follows. Many different categories of directed graphs appear in the literature, so we begin by specifying our definition. The choices made here will be accounted for in \Cref{rmk:digraph_def}. 

\begin{definition}\label{def:digraph}
    A \demph{directed graph} \(G\) consists of a set of \demph{vertices} \(V(G)\) and a set of \demph{directed edges} \(E(G) \subseteq V(G) \times V(G)\). A directed edge \((u,v)\) is depicted by an arrow \(u \to v\).
    A \demph{map of directed graphs} \(G \to H\) is a function \(f: V(G) \to V(H)\) with the property that for every directed edge \(u \to v\) in \(G\), either  \(f(u) = f(v)\) or there is a directed edge \(f(u) \to f(v)\) in \(H\) (or both). We denote the category of directed graphs and maps of directed graphs by \(\cn{DiGraph}\). 
\end{definition} 

\begin{definition}\label{def:directed_path}
    A \demph{directed path} in a directed graph \(G\) is a tuple \((v_0, \ldots, v_k)\) of vertices in \(G\) such that for each \(0 \leq i < k\) either \(v_i = v_{i+1}\) or \((v_i, v_{i+1})\) is a directed edge in \(G\) (or both).
\end{definition}

Notice that we do not require the vertices in a directed path to be distinct.

Recall that a \emph{preorder} $\leq$ on a set $X$ 
is a binary relation on $X$
that is reflexive and transitive, and that a 
\emph{monotone} map between preordered sets is one that 
preserves the preorders.
Preorders and monotone maps form a category $\cn{PreOrd}$.

\begin{definition}\label{def:reachability}
    Let \(G\) be a directed graph. The \demph{reachability relation} of \(G\) is the preorder \(\Pre(G)\) on the vertices of \(G\) in which \(u \leq v\) if and only if there exists a directed path from \(u\) to \(v\) in \(G\).
    The operation of assigning to a directed graph its reachability
    relation extends to a functor
    \[\Pre: \cn{DiGraph} \to \cn{PreOrd}.\]
\end{definition}

A directed graph \(G\) is the same thing as a binary relation on the set \(V(G)\). 
Thus there is an inclusion 
\[\iota: \cn{PreOrd} \hookrightarrow \cn{DiGraph}\]
that
takes a preorder \(P\) to the directed graph in which there is an edge \(p \to q\) whenever \(p \leq q\) in \(P\). Indeed, this is a \emph{reflective} subcategory: the functor \(\iota\) has a left adjoint, 
namely $\Pre$ itself.

\begin{lemma}\label{lem:Pre_adjoint}
    The functor $\Pre$ is left-adjoint to the functor $\iota$.
\end{lemma}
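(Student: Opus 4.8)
The plan is to verify directly that for a directed graph $G$ and a preorder $P$ there is a natural bijection
\[
\cn{PreOrd}(\Pre(G), P) \;\cong\; \cn{DiGraph}(G, \iota P).
\]
Since both sides are sets of functions $V(G) \to V(P)$ (preorders and directed graphs being structures on their underlying vertex sets, and $\Pre$, $\iota$ both acting as the identity on vertices), the content is purely about which functions respect the relevant structure. So the real task is to show that a function $f\colon V(G) \to V(P)$ is a map of directed graphs $G \to \iota P$ if and only if it is a monotone map $\Pre(G) \to P$.

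First I would unwind the definitions on each side. A function $f$ is a map $G \to \iota P$ exactly when, for every directed edge $u \to v$ of $G$, either $f(u) = f(v)$ or $f(u) \to f(v)$ is an edge of $\iota P$; by definition of $\iota$ the latter means $f(u) \leq f(v)$ in $P$, and since $P$ is reflexive the ``$f(u)=f(v)$'' case is subsumed, so the condition is simply: $u \to v$ in $G$ implies $f(u) \leq f(v)$ in $P$. On the other side, $f$ is monotone $\Pre(G) \to P$ exactly when $u \leq v$ in $\Pre(G)$ implies $f(u) \leq f(v)$ in $P$, i.e.\ whenever there is a directed path from $u$ to $v$ in $G$ we have $f(u) \leq f(v)$. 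The implication from monotone to graph map is immediate, since a single edge $u \to v$ (or the degenerate path $(u,u)$) is in particular a directed path. For the converse, I would take a directed path $(v_0, \dots, v_k)$ witnessing $u = v_0 \leq v_k = v$ in $\Pre(G)$; for each consecutive pair either $v_i = v_{i+1}$ or $v_i \to v_{i+1}$ is an edge, and in either case the graph-map condition together with reflexivity of $P$ gives $f(v_i) \leq f(v_{i+1})$; chaining these via transitivity of $P$ yields $f(u) \leq f(v)$. This establishes the bijection of hom-sets.

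Finally I would record naturality: given maps $G' \to G$ in $\cn{DiGraph}$ and $P \to P'$ in $\cn{PreOrd}$, both squares commute on the nose because every map in sight is the identity on underlying vertex sets, so precomposition and postcomposition of functions are literally the same operation on both sides of the bijection. This can be dispatched in a sentence, or one may simply invoke the characterization of adjunctions by a natural isomorphism of hom-functors.

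I do not anticipate a genuine obstacle here: the only point requiring a moment's care is the ``converse'' direction above, where one must remember that a directed path is built from steps that are \emph{either} edges \emph{or} equalities, and use reflexivity of the target preorder to handle the equality steps before chaining by transitivity. Everything else is a matter of matching definitions, and the naturality is automatic from the fact that $\Pre$ and $\iota$ are both identity-on-objects-of-vertices.
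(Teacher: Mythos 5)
Your proof is correct and follows essentially the same route as the paper: both establish the adjunction by verifying the hom-set bijection, i.e.\ that a vertex function is a map of directed graphs $G \to \iota(P)$ if and only if it is a monotone map $\Pre(G) \to P$. The only cosmetic difference is in the direction ``graph map implies monotone'': you chain inequalities along a directed path explicitly (using reflexivity for the equality steps and transitivity to concatenate), whereas the paper invokes the functoriality of $\Pre$ together with $\Pre(\iota(P)) = P$ --- which amounts to the same computation, just packaged differently.
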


\begin{proof}
Let \(G\) and \(H\) be directed graphs.  To establish the adjunction it suffices to show that, given a directed graph \(G\) and a preorder \(P\), a function \(f\) from the vertices of \(G\) to the elements of \(P\) determines a monotone map \(\Pre(G) \to P\) if and only if it determines a map of directed graphs \(G \to \iota(P)\).

Indeed, for any preorder \(P\) we have \(\Pre(\iota(P)) = P\), so if \(f\) determines a map of directed graphs \(G \to \iota(P)\) then functoriality ensures it determines a monotone map \(\Pre(G) \to \Pre(\iota(P)) = P\). For the converse, assume \(f\) is monotone, and take any edge \(u \to v\) in \(G\). By definition of \(\Pre(G)\) we have \(u \leq v\), so the monotonicity of \(f\) implies \(f(u) \leq f(v)\) in $P$, i.e.~there is an edge \(f(u) \to f(v)\) in \(\iota(P)\). Thus, \(f\) determines a map of directed graphs \(G \to \iota(P)\).
\end{proof}

\begin{remark}\label{rmk:digraph_def}
    Our definition of a directed graph allows loops, but does not demand them; on the other hand, it does not allow multiple edges. We \emph{could} allow multiple edges without substantially changing the story of this paper: any parallel edges---as well as the presence or absence of loops---are forgotten upon taking the reachability relation of a directed multigraph. Since the constructions that concern us factor through the reachability relation, they are insensitive to loops or multiple edges. Thus, the results in this paper can readily be extended to the setting of quivers, whose reachability relations are studied by Caputi and Riihim\"aki in \cite{CaputiRiihimaki}.
    
    By contrast, the choice of morphisms is significant. Whereas some authors ask that morphisms of directed graphs preserve all edges, our morphisms are permitted to preserve edges or contract them. This choice does affect the development; in particular, it gives us Lemma~\ref{lem:Pre_adjoint}, which will be important in the proof of the excision theorem for reachability homology (\Cref{thm:excision_nerve}).
\end{remark}

\begin{remark}\label{rmk:trans_clos}
The reachability relation on a directed graph \(G\) is the reflexive and transitive closure of the binary relation represented by the edges of \(G\). Taking the reflexive transitive closure of a relation is an idempotent operation; consequently, \(\Pre\) does not distinguish \(G\) from the directed graph representing its reflexive, transitive closure. More generally, one can freely add edges between any vertices in \(G\) that are already connected by a path, without changing the reachability relation. 
\end{remark}

\begin{example}\label{eg:triangles}
    Consider again these three directed graphs.
    \[
        \begin{tikzpicture}
            \node (a) at (0:1) {$\bullet$};
            \node (c) at (120:1) {$\bullet$};
            \node (e) at (240:1) {$\bullet$};
    
            \draw[stealth-] (e) -- (a);
            \draw[stealth-] (a) -- (c);
            \draw[stealth-] (c) -- (e);
    
        \end{tikzpicture}
        \qquad\qquad
        \begin{tikzpicture}
            \node (a) at (0:1) {$\bullet$};
            \node (c) at (120:1) {$\bullet$};
            \node (e) at (240:1) {$\bullet$};
    
            \draw[stealth-stealth] (e) -- (a);
            \draw[stealth-stealth] (c) -- (e);
    
        \end{tikzpicture}
        \qquad\qquad
        \begin{tikzpicture}
            \node (a) at (0:1) {$\bullet$};
            \node (c) at (120:1) {$\bullet$};
            \node (e) at (240:1) {$\bullet$};
    
            \draw[stealth-stealth] (e) -- (a);
            \draw[stealth-stealth] (a) -- (c);
            \draw[stealth-stealth] (c) -- (e);
    
        \end{tikzpicture}
    \]
    In each of these one can travel from any vertex to any other
    along a directed path.
    It follows that all three graphs 
    have the same reachability relation, 
    namely the `complete' preorder in which 
    $v\leq w$ for any two vertices $v$ and $w$.
\end{example}

As \Cref{rmk:trans_clos} and \Cref{eg:triangles} illustrate, taking the reachability relation of a directed graph destroys a great deal of information. To retain more information, one might instead choose to record not just the existence of paths between vertices, but their lengths---regarding a directed graph not as a preorder but as a metric space. Here, the term `metric space' refers to what is sometimes called an `extended quasi-metric space' or just a `generalized metric space': a set equipped with a distance function that satisfies the triangle inequality and is zero on the diagonal, but need not be symmetric and may take infinite values \cite[Example 1.2.2 (iii)]{LeinsterMetric2013}.
Every directed graph \(G\) determines a metric \(d_G\) on its set of vertices, the \demph{shortest path metric}, in which \(d_G(u,v)\) is the minimal number of edges in a directed path from \(u\) to \(v\) in \(G\), or infinity if no such path exists.

In \Cref{sec:RH_kunneth} we will consider two binary operations on the category of directed graphs: the \emph{box product} and the \emph{strong product}.

\begin{definition}\label{def:box_and_strong}
The \demph{box product} of directed graphs \(G\) and \(H\) is the directed graph \(G \square H\) with \(V(G \square H) = V(G) \times V(H)\) and \(((g_1,h_1),(g_2,h_2)) \in E(G \square H)\) if
\begin{itemize}
    \item \(g_1 = g_2\) and \((h_1,h_2) \in E(H)\), or
    \item \((g_1,g_2) \in E(G)\) and \(h_1 = h_2\).
\end{itemize}
\end{definition}

The box product is sometimes referred to as the `cartesian product' of directed graphs; however, it is not the categorical product. Rather, when \(\cn{DiGraph}\) is defined as in \Cref{def:digraph}, that distinction belongs to the strong product.

\begin{definition}\label{def:strong}
The \demph{strong product} of directed graphs \(G\) and \(H\) is the directed graph \(G \squarediv H\) with \(V(G \squarediv H) = V(G) \times V(H)\) and \(((g_1,h_1), (g_2,h_2)) \in E(G \squarediv H)\) if
\begin{itemize}
\item \(g_1 = g_2\) and \((h_1, h_2) \in E(H)\), or
\item \((g_1,g_2) \in E(G)\) and \(h_1 = h_2\), or
\item \((g_1,g_2) \in E(G)\) and \((h_1,h_2) \in E(H)\).
\end{itemize}
\end{definition}

\begin{remark}
    Though the box product is not the categorical product, it does acquire a very natural categorical interpretation when viewing directed graphs as metric spaces via the shortest path metric. For directed graphs \(G\) and \(H\), one has
    \[d_{G \square H}((g,h),(g',h')) = d_G(g,g') + d_H(h,h')\]
    for all \((g,h), (g',h') \in V(G) \times V(H)\); in other words, the box product, under the shortest path metric, is the \(\ell_1\)-product of the metric spaces \((G,d_G)\) and \((H,d_H)\). The \(\ell_1\)-product is the tensor product for the monoidal structure that arises naturally when considering a metric space as an enriched category. (For the classical account of this perspective on metric spaces, see \cite{LawvereMetric1974}.) It is this which accounts for the importance of the box product in the theory of magnitude homology (see \cite[Propositions 1.4.3 and 2.3.6]{LeinsterMetric2013} and \cite[Section 4]{Roff2023}).
\end{remark}

Though the strong product and the box product look quite different as directed graphs and play quite different roles structurally in \(\cn{DiGraph}\), from the perspective of reachability they are indistinguishable, as the next lemma shows.

\begin{lemma}\label{lem:box_v_strong}
    Let \(G\) and \(H\) be directed graphs, and let \((g,h), (g', h')\) be elements of \(V(G) \times V(H)\). The following are equivalent:
    \begin{enumerate}
        \item There exists a directed path from \((g,h)\) to \((g',h')\) in \(G \square H\). \label{eq:box_v_strong1}
        \item There exists a directed path from \((g,h)\) to \((g',h')\) in \(G \squarediv H\). \label{eq:box_v_strong2}
        \item There exist directed paths from \(g\) to \(g'\) in \(G\) and from \(h\) to \(h'\) in \(H\). \label{eq:box_v_strong3}
    \end{enumerate}
\end{lemma}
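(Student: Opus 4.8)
The plan is to prove the cycle of implications $\eqref{eq:box_v_strong1} \Rightarrow \eqref{eq:box_v_strong2} \Rightarrow \eqref{eq:box_v_strong3} \Rightarrow \eqref{eq:box_v_strong1}$, using nothing beyond the definitions of the two products, of a directed path, and of a map of directed graphs. The implication $\eqref{eq:box_v_strong1} \Rightarrow \eqref{eq:box_v_strong2}$ is immediate once one notes that $E(G \square H) \subseteq E(G \squarediv H)$: the two clauses of \Cref{def:box_and_strong} are the first two clauses of \Cref{def:strong}, so every directed path in $G \square H$ is already a directed path in $G \squarediv H$. For $\eqref{eq:box_v_strong2} \Rightarrow \eqref{eq:box_v_strong3}$, I would observe that in each of the three clauses of \Cref{def:strong} one has $g_1 = g_2$ or $(g_1, g_2) \in E(G)$, so the coordinate projection $V(G) \times V(H) \to V(G)$ is a map of directed graphs $G \squarediv H \to G$, and symmetrically for $H$. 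Since a map of directed graphs carries directed paths to directed paths (check the two cases in \Cref{def:directed_path} against \Cref{def:digraph}, or invoke functoriality of $\Pre$), projecting a directed path from $(g,h)$ to $(g',h')$ in $G \squarediv H$ yields the required paths in $G$ and in $H$.

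The one direction with real content is $\eqref{eq:box_v_strong3} \Rightarrow \eqref{eq:box_v_strong1}$, which I would settle by an explicit ``staircase''. Given directed paths $g = a_0, a_1, \dots, a_m = g'$ in $G$ and $h = b_0, b_1, \dots, b_n = h'$ in $H$, consider the tuple
\[
    (a_0, b_0),\ (a_1, b_0),\ \dots,\ (a_m, b_0),\ (a_m, b_1),\ \dots,\ (a_m, b_n)
\]
of vertices of $G \square H$; it runs from $(g,h)$ to $(g',h')$. In the first block the second coordinate is constantly $b_0$ and successive first coordinates either agree or span an edge of $G$, so each step is a valid step of a directed path in $G \square H$ (using the second clause of \Cref{def:box_and_strong} in the non-degenerate case); the second block is handled symmetrically using the first clause. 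This exhibits a directed path as required and closes the cycle.

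I do not expect a genuine obstacle. The only points needing attention are routine: permitting the degenerate steps where consecutive entries of the $G$-path or the $H$-path coincide, which \Cref{def:directed_path} explicitly allows; and verifying that the coordinate projections really are morphisms of directed graphs out of the strong product, which is visible directly from \Cref{def:strong}. A mild alternative to the staircase for $\eqref{eq:box_v_strong3} \Rightarrow \eqref{eq:box_v_strong1}$ would be induction on $m + n$, but the closed-form concatenation above seems cleanest.
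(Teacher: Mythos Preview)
Your proposal is correct and follows essentially the same route as the paper: the same cycle of implications, the same edge-inclusion for \eqref{eq:box_v_strong1}$\Rightarrow$\eqref{eq:box_v_strong2}, projection of a path for \eqref{eq:box_v_strong2}$\Rightarrow$\eqref{eq:box_v_strong3}, and the identical staircase $((a_0,b_0),\ldots,(a_m,b_0),(a_m,b_1),\ldots,(a_m,b_n))$ for \eqref{eq:box_v_strong3}$\Rightarrow$\eqref{eq:box_v_strong1}. The only cosmetic difference is that you package the projection step as ``coordinate projections are graph maps and graph maps preserve paths,'' whereas the paper verifies directly that the projected tuple is a path; the content is the same.
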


\begin{proof}
    Since every directed edge in \(G \square H\) is also a directed edge in \(G \squarediv H\), it is clear that (\ref{eq:box_v_strong1}) implies (\ref{eq:box_v_strong2}).
    
    To see that (\ref{eq:box_v_strong2}) implies (\ref{eq:box_v_strong3}), take any directed path from \((g, h)\) to \((g', h')\) in \(G \squarediv H\)---say, \(((g_0, h_0), \ldots, (g_n,h_n))\). Consider the tuple \((g_0, \ldots, g_n)\) of vertices in \(G\). Since for each \(0 \leq i < n\) there is an edge \((g_i,h_i) \to (g_{i+1}, h_{i+1})\) in \(G \squarediv H\), for each \(i\) we must have either \(g_i=g_{i+1}\) or else an edge \(g_i \to g_{i+1}\) in \(G\). In other words, \((g_0, \ldots, g_n)\) is a directed path from \(g\) to \(g'\) in \(G\). Similarly, \((h_0, \ldots, h_n)\) is a directed path from \(h\) to \(h'\) in \(H\).
    
    Now suppose \((g_0, \ldots, g_m)\) is a directed path in \(G\) from \(g = g_0\) to \(g' = g_m\), and \((h_0, \ldots, h_n)\) is a path in \(H\) from \(h = h_0\) to \(h' = h_n\). Then 
    \[\left((g_0, h_0), \ldots, (g_m, h_0), (g_m,h_1), \ldots, (g_m, h_n)\right)\]
    is a directed path in \(G \square H\) from \((g,h)\) to \((g',h')\); this says (\ref{eq:box_v_strong3}) implies (\ref{eq:box_v_strong1}).
\end{proof}

The product of preorders \(P\) and \(Q\), denoted \(P \land Q\), is the set \(P \times Q\) equipped with the preorder in which \((u_1, v_1) \leq (u_2, v_2)\) if and only if \(u_1 \leq u_2\) in \(P\) and \(v_1 \leq v_2\) in \(Q\). In light of \Cref{lem:box_v_strong}, the following proposition is immediate.

\begin{proposition}\label{prop:pre_digraph_monoidal}
Let \(G\) and \(H\) be directed graphs. Then
\[\Pre(G \square H) \cong \Pre(G \squarediv H) \cong \Pre(G) \land \Pre(H),\]
naturally in \(G\) and \(H\). \qed
\end{proposition}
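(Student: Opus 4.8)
The plan is to observe that the three preorders appearing in the statement are all carried by the \emph{same} set, namely $V(G) \times V(H)$, and then to check that they carry the same relation; the latter is exactly the content of \Cref{lem:box_v_strong}. Concretely, for $(g,h), (g',h') \in V(G) \times V(H)$, the relation $(g,h) \leq (g',h')$ holds in $\Pre(G \square H)$ precisely when there is a directed path from $(g,h)$ to $(g',h')$ in $G \square H$, which by the equivalence of (\ref{eq:box_v_strong1}) and (\ref{eq:box_v_strong3}) in \Cref{lem:box_v_strong} happens if and only if there are directed paths $g \to g'$ in $G$ and $h \to h'$ in $H$ --- that is, if and only if $g \leq g'$ in $\Pre(G)$ and $h \leq h'$ in $\Pre(H)$, which is precisely the defining condition for $(g,h) \leq (g',h')$ in $\Pre(G) \land \Pre(H)$. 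So the identity function on $V(G) \times V(H)$ is an isomorphism $\Pre(G \square H) \cong \Pre(G) \land \Pre(H)$ of preorders. Running the identical argument with the equivalence of (\ref{eq:box_v_strong2}) and (\ref{eq:box_v_strong3}) gives $\Pre(G \squarediv H) \cong \Pre(G) \land \Pre(H)$, and composing these yields all three isomorphisms.

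For naturality, I would first record that $\square$ and $\squarediv$ are functors $\cn{DiGraph} \times \cn{DiGraph} \to \cn{DiGraph}$ and that $\land$ is a functor $\cn{PreOrd} \times \cn{PreOrd} \to \cn{PreOrd}$: a pair of maps $f \colon G \to G'$, $k \colon H \to H'$ induces $f \times k$ on vertex sets, and one checks directly that this respects the edge conditions in \Cref{def:box_and_strong} and \Cref{def:strong} (an edge of $G \square H$ or of $G \squarediv H$ is either sent to an edge or collapsed by $f \times k$), and likewise respects the product preorder. Since every isomorphism constructed above is the identity on the common underlying set $V(G) \times V(H)$, and since each of the composite functors $\Pre(-\square-)$, $\Pre(-\squarediv-)$ and $\Pre(-) \land \Pre(-)$ sends the pair $(f,k)$ to the monotone map underlain by $f \times k$, the required naturality squares commute at the level of underlying functions and hence commute.

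Since \Cref{lem:box_v_strong} is doing the real work, I do not anticipate a serious obstacle; the only point that needs any care is the bookkeeping for naturality --- verifying that $\square$, $\squarediv$, and $\land$ are genuinely functorial and that the comparison maps, each being $\mathrm{id}_{V(G)\times V(H)}$ at the level of sets, assemble into natural transformations. This is exactly why the proposition is flagged as immediate, and I would be content to present it with only the short verification above.
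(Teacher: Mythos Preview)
Your proof is correct and follows exactly the approach the paper intends: the paper marks this proposition as immediate from \Cref{lem:box_v_strong} (hence the \qed), and your argument simply spells out that immediacy by observing that all three preorders share the underlying set $V(G)\times V(H)$ and that the lemma identifies their relations. The only addition you make is the explicit naturality check, which the paper leaves implicit; your bookkeeping there is fine.
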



\section{The nerve of a directed graph}\label{sec:nerve}

We have seen that every directed graph determines a preorder on its set of vertices. In turn, every preordered set \((X, \leq)\) can be regarded as a small category with objects the elements of \(X\) and an arrow \(u \to v\) if and only if \(u \leq v\). A function between preorders is monotone if and only if it defines a functor between the corresponding categories, so \(\cn{PreOrd}\) embeds as a full subcategory of \(\cn{Cat}\), the category of small categories. We will routinely regard preorders as categories via this embedding.

In this setting, natural transformations have an especially simple description.

\begin{lemma}\label{lem:preord_nt}
    Let \((X,\leq)\) and \((Y, \leq)\) be preordered sets, and \(f,g: X \to Y\) be monotone maps (equivalently, functors). There is a natural transformation \(f \Rightarrow g\) if and only if \(f(x) \leq g(x)\) for every \(x\in X\).
\end{lemma}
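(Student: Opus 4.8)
The plan is to unwind the definition of a natural transformation in the special case where the target category is a preorder, using the fact that a preorder is a category in which there is at most one morphism between any two objects. First I would recall that a natural transformation $\alpha\colon f \Rightarrow g$ consists of a choice, for each $x \in X$, of a morphism $\alpha_x\colon f(x) \to g(x)$ in $Y$, subject to the naturality condition that for every arrow $x \to x'$ in $X$ the evident square in $Y$ commutes. Since $Y$, viewed as a category, has the property that there is a morphism $u \to v$ precisely when $u \leq v$, the existence of $\alpha_x$ for every $x$ is equivalent to having $f(x) \leq g(x)$ for every $x$.

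For the ``only if'' direction: given $\alpha$, the component $\alpha_x$ is a morphism $f(x) \to g(x)$ in $Y$, which by definition of the preorder-as-category witnesses $f(x) \leq g(x)$. For the ``if'' direction: suppose $f(x) \leq g(x)$ for all $x$. Then for each $x$ there is a (unique) morphism $\alpha_x\colon f(x) \to g(x)$ in $Y$; I claim the family $(\alpha_x)_{x \in X}$ is automatically natural. Indeed, for any arrow $x \to x'$ in $X$, both composites $f(x) \to f(x') \to g(x')$ and $f(x) \to g(x) \to g(x')$ are morphisms from $f(x)$ to $g(x')$ in $Y$, and since $Y$ is a preorder there is at most one such morphism, so the two composites coincide and the naturality square commutes.

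I do not expect any real obstacle here: the only thing to be careful about is to phrase the argument so that it uses \emph{both} halves of ``preorder'' — that there is a morphism $u \to v$ exactly when $u \leq v$ (needed to extract the inequalities and to build the components), and that any two parallel morphisms are equal (needed to get naturality for free). The verification that the assignment $x \mapsto \alpha_x$ respects identities and composition is subsumed in the observation that all diagrams in a preorder commute, so no separate functoriality check is required.
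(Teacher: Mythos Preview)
Your proposal is correct and follows essentially the same approach as the paper: both arguments observe that the components of a natural transformation are exactly the inequalities $f(x)\leq g(x)$, and that naturality is automatic because in a preorder any two parallel morphisms agree, so every square commutes. Your version is just a slightly more explicit unpacking of the same reasoning.
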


\begin{proof}
    The data of a natural transformation \(f \Rightarrow g\) consists precisely of an arrow \(f(x) \to g(x)\) for each \(x \in X\), which is to say an inequality \(f(x) \leq g(x)\). The naturality condition is automatically satisfied since, in a preorder, for every pair of composable arrows there is exactly one possible composite, which guarantees that every square commutes.
\end{proof}

We follow Di \textit{et al} \cite{DIMZ} in defining the \emph{nerve} of a directed graph \(G\) to be the nerve of \(\Pre(G)\), regarded as a category. 
The nerve of a small category \(\cs{A}\) is a simplicial set whose topology captures information about the structure of \(\cs{A}\);  see~\cite{Segal} or~\cite[\S XII.2]{MacLane}.
The nerve construction is functorial: every functor between small categories induces a simplicial map between their nerves. Moreover, given functors \(F,G: \cs{A} \to \cs{B}\), any natural transformation \(\alpha: F \Rightarrow G\) gives rise to a homotopy, in the sense of simplicial sets, between the maps induced by \(F\) and \(G\)
(see~\cite[Proposition~2.1]{Segal}, where the result is proved for
classifying spaces; the same proof applies to nerves).
See~\cite[\S I]{Segal} or~\cite[\S VII.5]{MacLane} for brief introductions to the theory of simplicial sets.

In the case of a preorder \(P\), the nerve \(\Nv(P)\) is the simplicial set whose \(k\)-simplices are chains \((p_0 \leq p_1 \leq \cdots \leq p_k)\) of elements in \(P\). For \(0 \leq i \leq k\) the face operator \(\delta_i\) discards the \(i^\mathrm{th}\) term in each chain---
\[\delta_i (p_0 \leq \cdots \leq p_k) = (p_0 \leq \cdots \leq \widehat{p_i} \leq \cdots \leq p_k)\]
---while the \(i^\mathrm{th}\) degeneracy operator duplicates the \(i^\mathrm{th}\) term:
\[\sigma_i (p_0 \leq \cdots \leq p_k) = (p_0 \leq \cdots \leq p_i \leq p_i \leq \cdots \leq p_k).\]
Let us spell out what this gives for the reachability relation of a directed graph. We abuse notation slightly by writing \(\Nv(G)\) to denote the simplicial set \(\Nv(\Pre(G))\).

\begin{definition}\label{def:digraph_nerve}
The \demph{nerve} of a directed graph \(G\) is the simplicial set \(\Nv(G)\) whose set of \(k\)-simplices is
\[\Nv_k(G) = \left\{(v_0, \ldots, v_k) \middle\vert 
\begin{array}{l}
v_0 \ldots v_k \in V(G) \text{ and for } 0 \leq i < k \text{ there} \\
\text{exists a directed path from } v_i \text{ to } v_{i+1} \text{ in } G
\end{array}
\right\}.\]
For \(0 \leq i \leq k\) the face operator \(\delta_i: \Nv_k(G) \to \Nv_{k-1}(G)\) discards the \(i^\mathrm{th}\) vertex:
\[\delta_i(v_0, \ldots, v_k) = (v_0, \ldots, \widehat{v_{i}}, \ldots, v_k).\]
(The face operators are well defined since a directed path from \(x_{i-1}\) to \(x_i\) can be concatenated with one from \(x_i\) to \(x_{i+1}\) to obtain a directed path from \(x_{i-1}\) to \(x_{i+1}\).)
The degeneracy operator \(\sigma_i: \Nv_k(G) \to \Nv_{k+1}(G)\) duplicates the \(i^\mathrm{th}\) vertex:
\[\sigma_i(v_0, \ldots, v_k) = (v_0, \ldots, v_i, v_i, \ldots, v_k).\]
\end{definition}

\begin{proposition}\label{prop:nerve_maps}
    Every map of directed graphs \(f: G \to H\) induces a simplicial map \(\Nv(f): \Nv(G) \to \Nv(H)\) specified on simplices by
    \[\Nv(f)(v_0,\ldots,v_k) = (f(v_0), \ldots, f(v_k)).\]
\end{proposition}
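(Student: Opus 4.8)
The plan is to deduce this from the functoriality of the nerve together with the functoriality of $\Pre$ recorded in \Cref{def:reachability}. First I would observe that a map of directed graphs $f\colon G\to H$ induces a monotone map $\Pre(f)\colon\Pre(G)\to\Pre(H)$, which on underlying sets is just the function $f$ itself: if $u\leq v$ in $\Pre(G)$ then there is a directed path $(u=w_0,w_1,\ldots,w_m=v)$ in $G$, and applying $f$ entrywise yields the tuple $(f(w_0),\ldots,f(w_m))$, in which each consecutive pair $f(w_j),f(w_{j+1})$ is either equal or joined by a directed edge of $H$ (this is exactly the defining property of a map of directed graphs, used as in the proof of \Cref{lem:Pre_adjoint}), so $(f(w_0),\ldots,f(w_m))$ is a directed path from $f(u)$ to $f(v)$ in $H$ and hence $f(u)\leq f(v)$ in $\Pre(H)$. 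Viewing preorders as categories as in \Cref{sec:nerve}, this monotone map is a functor, and the nerve of any functor is a simplicial map; thus $\Nv(\Pre(f))\colon\Nv(\Pre(G))\to\Nv(\Pre(H))$ is a simplicial map.

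Next I would unwind what this simplicial map does on simplices. A $k$-simplex of $\Nv(\Pre(G))$ is a chain $(v_0\leq\cdots\leq v_k)$ in $\Pre(G)$, and the nerve of the functor $\Pre(f)$ sends it to $(\Pre(f)(v_0)\leq\cdots\leq\Pre(f)(v_k)) = (f(v_0)\leq\cdots\leq f(v_k))$. Under the abuse of notation whereby $\Nv(G)$ denotes $\Nv(\Pre(G))$, a $k$-simplex of $\Nv(G)$ is precisely a tuple $(v_0,\ldots,v_k)$ with a directed path from each $v_i$ to the next, and the induced map is exactly $(v_0,\ldots,v_k)\mapsto(f(v_0),\ldots,f(v_k))$, as claimed. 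Compatibility with the face and degeneracy operators of \Cref{def:digraph_nerve} is then automatic from functoriality of the nerve, but it can also be seen directly: those operators merely delete or duplicate an entry of the tuple, while $\Nv(f)$ is applied entrywise, so the two manifestly commute.

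The proposition is essentially a bookkeeping exercise and there is no substantial obstacle. The only point that requires any care is the well-definedness check in the first paragraph---verifying that the image tuple really is a simplex of $\Nv(H)$---and this is precisely where the definition of a morphism of directed graphs, allowing edges to be either preserved or contracted, is used. (One could equally phrase the whole argument without invoking the general nerve construction, taking the entrywise formula as a definition and checking directly that it preserves simplices and commutes with the structure maps; the content is the same.)
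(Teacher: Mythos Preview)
Your proposal is correct and takes essentially the same approach as the paper: factor $\Nv$ as the composite $\cn{DiGraph}\xrightarrow{\Pre}\cn{PreOrd}\hookrightarrow\cn{Cat}\xrightarrow{\Nv}\cn{SSet}$, invoke functoriality of each piece, and then read off the explicit formula on simplices. You supply slightly more detail (verifying monotonicity of $\Pre(f)$ directly and spelling out the action on chains) where the paper simply cites the functoriality of $\Pre$ and says the formula is obtained by ``following $f$ through'' the composite, but the strategy is identical.
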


\begin{proof}
    The construction \(\Nv\) can be obtained as the composite
    \begin{equation}\label{eq:nerve_maps}
        \cn{DiGraph} \xto{\Pre} \cn{PreOrd} \hookrightarrow \cn{Cat} \xto{\Nv} \cn{SSet},
    \end{equation}
    where \(\cn{SSet}\) is the category of simplicial sets and simplicial maps. Here, the first arrow takes a graph to its reachability relation; it is functorial by \Cref{lem:Pre_adjoint}. The second arrow is the functor embedding \(\cn{PreOrd}\) into \(\cn{Cat}\), and the third is the functor taking the nerve of small category. It follows that \(\Nv\) is functorial, and the description of the induced map is obtained by following \(f\) through (\ref{eq:nerve_maps}).
\end{proof}

The nerve construction also supplies a natural notion of homotopy between maps of directed graphs.

\begin{definition}\label{def:long_homotopy}
Given maps of directed graphs $f,g\colon G\to H$, we say \emph{there is a long homotopy from $f$ to $g$}, and write $f\rightsquigarrow g$, if, for each vertex $v$ of $G$, there is a directed path in $H$ from $f(v)$ to $g(v)$.
\end{definition}

Observe that the relation \(f\rightsquigarrow g\) is a \emph{condition} on $f$ and $g$ that requires the existence of certain paths in $H$. But it does not require us to make any choices of such paths, let alone a coherent such choice. 
Note that the relation $\rightsquigarrow$ is transitive but not necessarily symmetric.

\begin{definition}
    We say the directed graphs $G$ and $H$ are \emph{long-homotopy equivalent} if there are maps $f\colon G\to H$ and $g\colon H\to G$ such that $f\circ g$ is related to $\mathrm{Id}_H$ by a zig-zag of long
    homotopies and $g\circ f$ is related to $\mathrm{Id}_G$
    by a zig-zag of long homotopies.
    In this situation we call $f$ and $g$ \emph{long-homotopy equivalences}.
\end{definition}

\begin{proposition}\label{prop:long_homotopy}
    Let \(G\) and \(H\) be directed graphs, and \(f,g: G \to H\) be maps of directed graphs. If \(f \rightsquigarrow g\) then there is a homotopy from \(\Nv(f)\) to \(\Nv(g)\).
\end{proposition}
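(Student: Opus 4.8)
The plan is to reduce this to the general fact about nerves of categories: a natural transformation between two functors induces a simplicial homotopy between the induced maps on nerves. This fact is already invoked in the text (citing \cite[Proposition~2.1]{Segal}), so the real work is to produce, from a long homotopy $f \rightsquigarrow g$, an actual natural transformation between the functors $\Pre(f)$ and $\Pre(g)$ on the level of preorders regarded as categories.

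First I would recall that by \Cref{prop:nerve_maps}, $\Nv(f)$ and $\Nv(g)$ are the simplicial maps obtained by applying the nerve functor to the monotone maps $\Pre(f), \Pre(g)\colon \Pre(G) \to \Pre(H)$. So it suffices to exhibit a natural transformation $\Pre(f) \Rightarrow \Pre(g)$; the homotopy on nerves then follows formally. By \Cref{lem:preord_nt}, a natural transformation between monotone maps of preorders exists precisely when $f(v) \leq g(v)$ in $\Pre(H)$ for every vertex $v$ of $G$. But $f(v) \leq g(v)$ in $\Pre(H)$ means exactly that there is a directed path in $H$ from $f(v)$ to $g(v)$ — which is precisely the hypothesis $f \rightsquigarrow g$ unpacked via \Cref{def:long_homotopy} and \Cref{def:reachability}. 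So the three definitions line up exactly.

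Stringing this together: the hypothesis $f \rightsquigarrow g$ gives $f(v) \leq g(v)$ in $\Pre(H)$ for all $v$; \Cref{lem:preord_nt} upgrades this to a natural transformation $\alpha\colon \Pre(f) \Rightarrow \Pre(g)$; applying the nerve functor and using the fact that natural transformations induce simplicial homotopies (as discussed in \Cref{sec:nerve}) yields a homotopy from $\Nv(\Pre(f)) = \Nv(f)$ to $\Nv(\Pre(g)) = \Nv(g)$. That completes the argument.

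Honestly, I do not expect a serious obstacle here — the proposition is essentially a dictionary translation between the combinatorial language of directed paths and the categorical language of natural transformations, with all the substantive content (that nat. transformations give homotopies of nerves) imported as a black box. The one point to be careful about is orientation/direction: one should check that the natural transformation goes the right way, i.e. that $f \rightsquigarrow g$ (paths from $f(v)$ \emph{to} $g(v)$) corresponds to $\alpha\colon \Pre(f) \Rightarrow \Pre(g)$ and not the reverse, and that this is consistent with whatever convention \cite{Segal} uses so that the induced homotopy indeed connects $\Nv(f)$ to $\Nv(g)$ rather than needing the reverse long homotopy. Since a simplicial homotopy in this setting is not required to be symmetric, getting the direction right matters, but it is a routine check.
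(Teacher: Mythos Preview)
Your proposal is correct and follows essentially the same approach as the paper's own proof: both unpack the hypothesis $f\rightsquigarrow g$ as the inequalities $f(v)\leq g(v)$ in $\Pre(H)$, invoke \Cref{lem:preord_nt} to obtain a natural transformation $\Pre(f)\Rightarrow\Pre(g)$, and then appeal to the standard fact that natural transformations induce simplicial homotopies on nerves. Your additional remark about checking the direction of the homotopy is a reasonable piece of care but does not represent a substantive departure.
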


\begin{proof}
Suppose \(f \rightsquigarrow g\); that is, for every \(v \in V(G)\) there is a directed path from \(f(v)\) to \(g(v)\) in \(H\). Passing to the reachability relation of both graphs, this says that \(f(v) \leq g(v)\) for every \(v \in V(G)\), which, by \Cref{lem:preord_nt}, is equivalent to the existence of a natural transformation \(f \Rightarrow g\). Since natural transformations induce homotopies on taking the nerve, the statement follows.
\end{proof}


\section{Reachability homology}\label{sec:RH}

We turn now to our main object: the homology of the normalized complex of simplicial chains in the nerve of a directed graph.
The \emph{normalized simplicial chain complex} of a simplicial set $S$,
with coefficients in a ring $R$, is a chain complex $N(S)$ of $R$-modules
with basis in degree $k$ given by the non-degenerate $k$-simplices of $S$;
see~\cite[8.3.6 and 8.3.7]{Weibel}.
To emphasize that the normalized chains on the nerve of a graph carries information about the reachability relation, we call it the \emph{reachability complex} of a directed graph; Di \textit{et al} call it the \emph{complex of regular accessible sequences} \cite[Section 1.6]{DIMZ}. Explicitly, it can be described as follows.

\begin{definition}\label{def:BPC}
Fix a commutative ground ring $R$. Let $G$ be a directed graph. The \emph{reachability complex} of $G$ is the chain complex $\bpc_\ast(G)$ of \(R\)-modules which in degree \(k\) is freely generated by those tuples \((v_0, \ldots, v_k)\) of vertices in \(G\) such that
    \begin{itemize}
        \item adjacent entries are distinct: \(v_{j-1}\neq v_j\) for \(j=1,\ldots, k\), and
        \item for each pair of adjacent entries \(v_{j-1},v_j\) there exists a directed path from \(v_{j-1}\) to \(v_j\) in \(G\).
    \end{itemize}
    The differential $\partial$ of $\bpc_\ast(G)$ acts on a generator \((v_0, \ldots, v_k)\) by omitting each term from the tuple in turn, and taking the alternating sum:
    \[\partial(v_0,\ldots,v_k) = \sum_{j=0}^k (-1)^j (v_0,\ldots,\widehat{v_j},\ldots,v_k).\]
    Here any term in which adjacent entries coincide is omitted. 

    We denote the homology of the reachability complex by
    \[\bph_\ast(G) = H_\ast(\bpc_\ast(G))\]
    and refer to it as the \demph{reachability homology} of \(G\). When necessary for clarity, we will write $\bpc_\ast(G;R)$ and $\bph_\ast(G;R)$ to emphasize the ground ring.
\end{definition}

\begin{remark}[The magnitude-path spectral sequence]\label{rmk:MPSS}
    The chain complex $\bpc_\ast(G)$ admits a natural filtration,
    under which a generator $(x_0,\ldots,x_k)$ lies in the filtration
    given by its \emph{length}
    $\ell(x_0,\ldots,x_k) = d(x_0,x_1)+\cdots+d(x_{k-1},x_k)$.
    The resulting spectral sequence is the 
    \emph{magnitude-path spectral sequence}, 
    or~\emph{MPSS},
    that appeared in~\cite[Remark~8.7]{HepworthWillerton2017}; whose importance was conclusively demonstrated by
    Asao~\cite{Asao-path}; which acquired its present name in Di \textit{et al}~\cite{DIMZ}; and whose properties are studied in detail by the present authors in~\cite{HepworthRoff2024}.
    
    The $E^1$-term of the MPSS is precisely the magnitude homology
    of~$G$, while---as Asao proved---the $E^2$-term 
    contains the path homology of $G$ on its horizontal axis. In~\cite{HepworthRoff2024} it is shown that many of the good features of path homology, including the K\"unneth and Mayer--Vietoris theorems, extend to the entire \(E^2\)-term of the MPSS, earning it the name \emph{bigraded path homology}. Moreover, bigraded path homology is a strictly finer invariant than path homology~\cite[Corollary 8.3]{HepworthRoff2024}. Thus, the magnitude-path spectral sequence encompasses several distinct invariants of directed graphs and places them in a systematic relationship with each other.
    
    If the filtration on \(\bpc_\ast(G)\) is bounded, meaning that in each degree $d$
    the lengths of the generators of $\bpc_d(G)$ are bounded
    above,
    then by construction the 
    MPSS converges to the reachability homology
    $\bph_\ast(G)$~(see 5.2.5, 5.4.2 and 5.5.1 of~\cite{Weibel}).
    Boundedness of $\bpc_\ast(G)$ can be guaranteed, for example,
    if $G$ is finite, or more generally if there is $L\geq 0$
    for which every distance in $G$ is either infinite or
    bounded above by $L$.
\end{remark}

\begin{remark}\label{rmk:acyclic_poset}
If a directed graph \(G\) is acyclic---meaning it contains no directed cycles---then the reachability relation on \(V(G)\) is antisymmetric, so \(\Pre(G)\) is a poset. In this case, \(\bpc_\ast(G)\) coincides with the normalized complex of simplicial chains in the \emph{order complex} of \(\Pre(G)\): the abstract simplicial complex whose faces are finite chains \(v_0 \leq v_1 \leq \cdots \leq v_k\) in \(\Pre(G)\) \cite{Wachs}. The reachability homology of \(G\) is then the \emph{poset homology} of the reachability relation on \(G\).

In fact, every preorder \(P\) is equivalent as a category to the poset \(\overline{P}\) obtained by identifying elements \(u\) and \(v\) whenever \(u \geq v\) and \(v \geq u\); this is an instance of the fact that every small category is equivalent to its skeleton \cite[Proposition 4.14]{JoyOfCats}. That equivalence induces a homotopy equivalence \(\Nv(P) \simeq \Nv(\overline{P})\). It follows that for every directed graph \(G\), the reachability homology \(\bph_\ast(G)\) coincides with the the poset homology of \(\overline{\Pre(G)}\). However, passing to the quotient poset destroys information that is required for the construction of the magnitude-path spectral sequence (see \Cref{rmk:MPSS}). Thus, if one is interested in understanding the spectral sequence, it is necessary to work with the complex \(\bpc_\ast(G)\).
\end{remark}

An explicit description of the induced maps follows immediately from \Cref{prop:nerve_maps} and standard facts about the normalized chain complex of a simplicial set.

\begin{proposition}[Induced maps]
    Any map of directed graphs \(f: G \to H\) induces a chain map
    \[f_\ast\colon \bpc_\ast(G) \longrightarrow \bpc_\ast(H) \]
    specified on generators by
    \[f_\ast(v_0,\ldots,v_k) =
        \begin{cases}
            (f(v_0),\ldots,f(v_k))
            &
            \text{if }f(v_{j-1})\neq f(v_j)
            \text{ for }j=1,\ldots,k
            \\
            0
            &
            \text{otherwise.}
        \end{cases}\]
    This in turn induces a map of homology groups
    \[
        f_\ast\colon
        \bph_\ast(G)
        \longrightarrow
        \bph_\ast(H). \tag*{\qed}
    \]
\end{proposition}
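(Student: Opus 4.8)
The plan is to produce $f_\ast$ as the image of the simplicial map $\Nv(f)$ under the normalized chain complex functor $N\colon\cn{SSet}\to\cn{Ch}_R$, and then to read off the explicit formula from the standard description of that functor on morphisms. First, by \Cref{prop:nerve_maps} the graph map $f$ induces a simplicial map $\Nv(f)\colon\Nv(G)\to\Nv(H)$, acting on a $k$-simplex by $(v_0,\ldots,v_k)\mapsto(f(v_0),\ldots,f(v_k))$. Applying $N$ (see~\cite[8.3.6 and 8.3.7]{Weibel}) yields a chain map $N(\Nv(f))\colon N(\Nv(G))\to N(\Nv(H))$, that is, a chain map $\bpc_\ast(G)\to\bpc_\ast(H)$, which we take as the definition of $f_\ast$. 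Functoriality of $N$ and of $\Nv$ then immediately gives the induced map $\bph_\ast(G)\to\bph_\ast(H)$ on homology, and shows that $(g\circ f)_\ast=g_\ast\circ f_\ast$ and $(\mathrm{Id})_\ast=\mathrm{Id}$.

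It then remains only to identify $f_\ast$ with the stated formula. For this I would invoke the standard description of $N$ on morphisms: realizing $N(S)$ as the quotient of the unnormalized chain complex by the subcomplex spanned by degenerate simplices, one identifies $N(S)_k$ with the free $R$-module on the non-degenerate $k$-simplices of $S$, and under this identification a simplicial map $\phi\colon S\to T$ induces the map sending a non-degenerate simplex $x$ to $\phi(x)$ when $\phi(x)$ is non-degenerate and to $0$ otherwise — the point being that $\phi$ carries degenerate simplices to degenerate simplices. By the explicit description of the degeneracies in \Cref{def:digraph_nerve}, a simplex $(v_0,\ldots,v_k)$ of $\Nv(G)$ is non-degenerate exactly when $v_{j-1}\neq v_j$ for $j=1,\ldots,k$; and $\Nv(f)(v_0,\ldots,v_k)=(f(v_0),\ldots,f(v_k))$ is non-degenerate exactly when $f(v_{j-1})\neq f(v_j)$ for $j=1,\ldots,k$. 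Substituting into the description above recovers precisely the case distinction in the statement.

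I do not expect any real obstacle here: the content is entirely bookkeeping about the functor $N$, and the only step requiring a moment's care is the standard fact that $N$ sends a simplicial map to the ``drop the degenerate images'' operation on the bases of non-degenerate simplices, which is exactly the source of the $0$ in the formula. As an alternative to citing functoriality of $N\circ\Nv$, one could instead verify directly on generators that $\partial f_\ast=f_\ast\partial$, matching the terms of $\partial(f(v_0),\ldots,f(v_k))$ against those of $f_\ast\partial(v_0,\ldots,v_k)$ and checking that the terms discarded on each side because of newly coincident adjacent entries correspond; but this is more laborious and less illuminating than appealing to the functor.
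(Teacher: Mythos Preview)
Your approach is correct and matches the paper's exactly: the paper states that the description ``follows immediately from \Cref{prop:nerve_maps} and standard facts about the normalized chain complex of a simplicial set,'' which is precisely what you do by applying $N$ to $\Nv(f)$ and reading off the formula via the non-degenerate basis. Your write-up is in fact more detailed than the paper's, which leaves the identification of the formula entirely to the reader.
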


\begin{example}[Face graphs and Hasse diagrams]\label{eg:face_graphs_RH}
    To a simplicial complex $S$ we can associate two directed graphs:
    \begin{itemize}
        \item
        The \emph{face graph} $\calF_S$ of $S$
        is the graph whose vertices are the simplices
        of $S$, and in which there is a directed edge
        $\sigma\to\tau$ if and only if $\sigma$ is a face of $\tau$.
        In other words, it is obtained by taking the face poset of $S$
        and then applying the functor $\iota$.
        \item
        The \emph{Hasse diagram} $\calG_S$ of $S$
        is the graph whose vertices
        are again the simplices of $S$, and in which 
        there is a directed edge $\sigma\to\tau$ if and only if
        $\sigma$ is a codimension $1$ face of $\tau$.
        In other words, it is the Hasse diagram of the face poset
        of $S$.
    \end{itemize}
    Then the reachability homology of $\calF_S$ and $\calG_S$ 
    are both isomorphic to the simplicial homology of $S$:
    \[
        \bph_\ast(\calF_S)
        \cong
        \bph_\ast(\calG_S)
        \cong
        H_\ast(S)
    \]
    To see this, observe that 
    $\calF_S$ is the reflexive transitive closure of $\calG_S$; thus, by \Cref{rmk:trans_clos}, \(\Pre(\calF_S) = \Pre(\calG_S)\).
    It follows that $\bpc_\ast(\calF_S) = \bpc_\ast(\calG_S)$. 
    Indeed, in both cases, the generators
    $(\sigma_0,\ldots,\sigma_k)$ 
    of the reachability complex are the \emph{flags of simplices} 
    $\sigma_0\subset\cdots\subset\sigma_k$ 
    in $S$ (with strict inclusions).
    These flags are nothing but the simplices in
    the barycentric subdivision $\mathrm{sd}(S)$,
    and by inspecting the differentials we find that 
    $\bpc_\ast(\calF_S)$ and $\bpc_\ast(\calG_S)$
    can both be identified with
    the simplicial chains on $\mathrm{sd}(S)$.
    The claim now follows
    because $S$ and its subdivision have isomorphic
    homologies.
\end{example}

Recall that we say \emph{there is a long homotopy} from \(f: G\to H\) to \(g: G \to H\), 
and write \(f \rightsquigarrow g\), 
if for every \(x \in G\) there exists a path from \(f(x)\) to \(g(x)\) in \(H\) (\Cref{def:long_homotopy}). 
Recall also that $G$ and $H$ are said to
be \emph{long-homotopy equivalent} if there are maps $f\colon G\to H$ and $g\colon H\to G$ such that $f\circ g$ and $g\circ h$ are related to the relevant identity maps by zig-zags of long homotopies.

The next proposition follows immediately from \Cref{prop:long_homotopy} and standard facts about the homology of simplicial sets~\cite[Lemma~8.3.13]{Weibel}.

\begin{theorem}[Homotopy invariance]
\label{thm:long_htpy_inv}
    Let $f,g\colon G\to H$ be maps of directed graphs.
    If $f\rightsquigarrow g$, then the maps
    \[
        f_\ast,g_\ast\colon \bpc_\ast(G)
        \longrightarrow
        \bpc_\ast(H)
    \]
    are chain-homotopic, and consequently the maps
    \[
        f_\ast,g_\ast\colon\bph_\ast(G)
        \longrightarrow
        \bph_\ast(H)
    \]
    are equal.
    The relevant chain homotopy is given by the map
    \[
        s_\ast\colon
        \bpc_\ast(G)
        \longrightarrow
        \bpc_\ast(H)
    \]
    specified on generators by
    \[
        (x_0,\ldots,x_i)
        \longmapsto
        \sum_{j=0}^i
        (-1)^{j+1}
        (f(x_0),\ldots,f(x_j),g(x_j),\ldots,g(x_i))
    \]
    where, as usual, any tuple in which adjacent entries 
    coincide is omitted. 
    It follows that long-homotopy equivalences
    induce isomorphisms on reachability
    homology.
    \qed
\end{theorem}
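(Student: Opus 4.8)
The plan is to bootstrap everything from the simplicial statements already established. Recall that, by construction, $\bpc_\ast(G)=N(\Nv(G))$, and the induced chain map $f_\ast\colon\bpc_\ast(G)\to\bpc_\ast(H)$ is nothing but $N$ applied to $\Nv(f)$; so it is enough to produce a chain homotopy between $N(\Nv(f))$ and $N(\Nv(g))$. By \Cref{prop:long_homotopy}, the hypothesis $f\rightsquigarrow g$ yields a simplicial homotopy from $\Nv(f)$ to $\Nv(g)$: indeed $f\rightsquigarrow g$ is, via \Cref{lem:preord_nt}, exactly a natural transformation $\alpha\colon f\Rightarrow g$ of functors $\Pre(G)\to\Pre(H)$, and a natural transformation induces a homotopy on nerves \cite[Proposition~2.1]{Segal}. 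Applying the standard fact that simplicially homotopic maps induce chain-homotopic maps on normalized chains \cite[Lemma~8.3.13]{Weibel} then produces the required chain homotopy, whence $f_\ast=g_\ast$ on $\bph_\ast$.

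To obtain the explicit formula I would trace through these constructions. The homotopy attached to a natural transformation $\alpha\colon F\Rightarrow G$ of functors $\mathcal{A}\to\mathcal{B}$ assigns to an $n$-simplex $a_0\to\cdots\to a_n$ of $N\mathcal{A}$ the $(n+1)$-simplices $h_j$ ($0\le j\le n$), where $h_j$ is the chain $F(a_0)\to\cdots\to F(a_j)\xrightarrow{\alpha}G(a_j)\to\cdots\to G(a_n)$, and the induced chain homotopy on normalized chains is the alternating sum $\sum_j(-1)^{j+1}h_j$ with degenerate terms discarded. Specializing to the preorders $\Pre(G)$ and $\Pre(H)$, where an $i$-simplex is just a tuple $(x_0,\ldots,x_i)$ with consecutive entries joined by directed paths, reproduces precisely the formula for $s_\ast$ in the statement. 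One should check that $s_\ast$ is well defined, i.e.\ that after deleting repeated adjacent entries each tuple $(f(x_0),\ldots,f(x_j),g(x_j),\ldots,g(x_i))$ is a legitimate generator of $\bpc_\ast(H)$: pairs of the form $f(x_\ell),f(x_{\ell+1})$ or $g(x_\ell),g(x_{\ell+1})$ are joined by directed paths because graph maps send directed paths to directed paths, and the junction $f(x_j),g(x_j)$ is joined by a directed path by the definition of $f\rightsquigarrow g$. As a cross-check one can verify $\partial s_\ast + s_\ast\partial = g_\ast - f_\ast$ directly; this is routine prism-operator bookkeeping and I would not write it out.

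For the final clause, suppose $f\colon G\to H$ and $g\colon H\to G$ are such that $g\circ f$ is joined to $\mathrm{Id}_G$, and $f\circ g$ to $\mathrm{Id}_H$, by zig-zags of long homotopies. Since $\rightsquigarrow$ is transitive and the first part shows that $\phi\rightsquigarrow\psi$ forces $\phi_\ast=\psi_\ast$ on reachability homology, every map occurring in such a zig-zag induces the same map on $\bph_\ast$; hence $g_\ast\circ f_\ast=(g\circ f)_\ast=\mathrm{id}$ and $f_\ast\circ g_\ast=(f\circ g)_\ast=\mathrm{id}$, so $f_\ast$ and $g_\ast$ are mutually inverse isomorphisms on reachability homology.

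The only mildly delicate point is the identification in the second paragraph — confirming that the chain homotopy delivered by the cited black boxes really is the explicit $s_\ast$ written down, and keeping the signs straight while doing so. Since the theorem asserts only the existence of \emph{a} chain homotopy realizing $g_\ast-f_\ast$, this can in any case be sidestepped by taking the displayed $s_\ast$ as a definition and verifying $\partial s_\ast+s_\ast\partial=g_\ast-f_\ast$ by hand.
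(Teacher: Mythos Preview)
Your proposal is correct and follows essentially the same route as the paper: the theorem is stated with a \qed and the preceding sentence says it ``follows immediately from \Cref{prop:long_homotopy} and standard facts about the homology of simplicial sets~\cite[Lemma~8.3.13]{Weibel}'', which is exactly the bootstrap you carry out, with the added (and welcome) detail of tracing the explicit chain homotopy and handling the long-homotopy-equivalence clause. One small quibble: in the final paragraph the appeal to transitivity of $\rightsquigarrow$ is a red herring---since $\rightsquigarrow$ is not symmetric, transitivity does not collapse a zig-zag at the level of the relation itself; the real point (which you do make) is that each step of the zig-zag forces equality on $\bph_\ast$, so the conclusion follows by chaining equalities.
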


\begin{example}[Three hexagons]\label{hexagons-one}
    Let us consider the three directed hexagons:
    \[
        \begin{tikzpicture}
            \node (a) at (0:1) {$\bullet$};
            \node (b) at (60:1) {$\bullet$};
            \node (c) at (120:1) {$\bullet$};
            \node (d) at (180:1) {$\bullet$};
            \node (e) at (240:1) {$\bullet$};
            \node (f) at (300:1) {$\bullet$};
    
            \draw[stealth-] (b) -- (a);
            \draw[stealth-] (b) -- (c);
            \draw[stealth-] (d) -- (c);
            \draw[stealth-] (d) -- (e);
            \draw[stealth-] (f) -- (e);
            \draw[stealth-] (f) -- (a);
    
            \node () at (0:0) {$A$};
    
        \end{tikzpicture}
        \qquad
        \begin{tikzpicture}
            \node (a) at (0:1) {$\bullet$};
            \node (b) at (60:1) {$\bullet$};
            \node (c) at (120:1) {$\bullet$};
            \node (d) at (180:1) {$\bullet$};
            \node (e) at (240:1) {$\bullet$};
            \node (f) at (300:1) {$\bullet$};
    
            \draw[-stealth] (b) -- (a);
            \draw[-stealth] (c) -- (b);
            \draw[-stealth] (d) -- (c);
            \draw[-stealth] (d) -- (e);
            \draw[-stealth] (e) -- (f);
            \draw[-stealth] (f) -- (a);
    
            \node () at (0:0) {$B$};
    
        \end{tikzpicture}
        \qquad
        \begin{tikzpicture}
            \node (a) at (0:1) {$\bullet$};
            \node (b) at (60:1) {$\bullet$};
            \node (c) at (120:1) {$\bullet$};
            \node (d) at (180:1) {$\bullet$};
            \node (e) at (240:1) {$\bullet$};
            \node (f) at (300:1) {$\bullet$};
    
            \draw[-stealth] (a) -- (b);
            \draw[-stealth] (b) -- (c);
            \draw[-stealth] (c) -- (d);
            \draw[-stealth] (d) -- (e);
            \draw[-stealth] (e) -- (f);
            \draw[-stealth] (f) -- (a);
    
            \node () at (0:0) {$C$};
    
        \end{tikzpicture}
    \]
    Reachability homology distinguishes the first of these from
    the other two:
    \begin{center}
    \begin{tabular}{c | c c c c c }
        degree & 0 & 1 & 2 & 3 & $\cdots$
        \\
        \hline
        $\bph_\ast(A)$ & $R$ & $R$ & $0$ & $0$ & $\cdots$
        \\
        $\bph_\ast(B)$ & $R$ & $0$ & $0$ & $0$ & $\cdots$
        \\
        $\bph_\ast(C)$ & $R$ & $0$ & $0$ & $0$ & $\cdots$
    \end{tabular}
    \end{center}
    We can explain these outcomes as follows.
    Given a graph with a vertex $v$,
    we will write $c_v$ for the constant self-map
    that sends all vertices to $v$.
    \begin{itemize}
        \item
        Hexagon $A$ is equal to the face
        graph $\calF_{\partial\Delta^2}$
        of a triangle. 
        Its  reachability homology is therefore isomorphic
        to the simplicial homology of a
        triangle (\Cref{eg:face_graphs_RH}), which is in turn isomorphic
        to the ordinary homology of a
        circle.  This gives the computation
        of $\bph_\ast(A)$.
        \item
        In hexagon $B$, 
        there is a directed path 
        from any vertex to the rightmost
        vertex~$y$.
        This means that we have
        a long homotopy
        $\mathrm{Id}_B\rightsquigarrow c_y$.
        It follows that $B$ is long homotopy equivalent to a singleton graph,
        and therefore that its reachability
        homology coincides with that of a 
        singleton (\Cref{thm:long_htpy_inv}).
        This gives the computation of 
        $\bph_\ast(B)$.
        \item
        In hexagon $C$, there is a directed path from any
        vertex to any other, and consequently there is a 
        long homotopy $\mathrm{Id}_C\rightsquigarrow c_v$
        for any vertex $v$.
        It follows that, like $B$, the graph
        $C$ is long-homotopy equivalent to a singleton graph, and this gives
        the computation of $\bph_\ast(C)$.
    \end{itemize}
\end{example}

In \Cref{sec:RH_MV} we will prove an excision and a Mayer--Vietoris theorem concerning the reachability homology of a pair of directed graphs, defined as follows.

\begin{definition}\label{def:relative}
    Given any directed graph \(G\) and any subgraph \(A \subseteq G\), the reachability complex of \(\bpc_\ast(A)\) is a subcomplex of \(\bpc_\ast(G)\). We denote by \(\bpc_\ast(G,A)\) the quotient chain complex \(\bpc_\ast(G) / \bpc_\ast(A)\). The \emph{(relative) reachability homology of the pair} \((G,A)\) is \(\bph_\ast(G,A) = H_\ast(\bpc_\ast(G,A))\).
\end{definition}


\section{A K\"unneth theorem}\label{sec:RH_kunneth}

In \Cref{sec:digraph_preord} we described two binary operations on directed graphs that are natural to consider in the setting of this paper, namely the categorical product in \(\cn{DiGraph}\) and the product corresponding, under the shortest path metric, to the \(\ell_1\)-product of metric spaces. In the graph-theoretic literature they are known, respectively, as the strong product~$\squarediv$ and the box product~$\square$.

Both path homology and magnitude homology are known to satisfy a K\"unneth theorem with respect to the {box product}. (For path homology this is Theorem~4.7 in~\cite{PH-kunneth}, while for magnitude homology it is proved for undirected graphs as Theorem 5.3 in \cite{HepworthWillerton2017}, for classical metric spaces as Theorem 4.3 in \cite{BottinelliKaiser2021}, and for generalized metric spaces, including directed graphs, as Theorem 4.6 in \cite{Roff2023}.) It has been conjectured that path homology may also satisfy a K\"unneth theorem with respect to the {strong product} (\cite[\S 5.6]{GrigoryanSlides} and \cite[\S5.3]{TangYau2022}).

In \Cref{sec:digraph_preord} we saw that the reachability preorder does not distinguish these products: \Cref{prop:pre_digraph_monoidal} states that for every pair of directed graphs \(G\) and \(H\) we have
\[\Pre(G \squarediv H) \cong \Pre(G) \land \Pre(H) \cong \Pre(G \square H),\]
naturally in \(G\) and \(H\). It follows that if reachability homology satisfies a K\"unneth theorem with respect to either the strong or the box product, it must satisfy a K\"unneth theorem with respect to both. In fact that is the case, as we now prove.

As for the classical statement in singular homology, the proof has two parts: we first prove an Eilenberg--Zilber-type theorem for the reachability chain complex, and from there  K\"unneth theorem follows by general facts of homological algebra.

\begin{theorem}[Eilenberg--Zilber for the reachability complex]\label{thm:RH_EZ}
    Let \(G\) and \(H\) be directed graphs. There are chain homotopy equivalences
    \begin{equation}\label{eq:RH_EZ_strong}
        \bpc(G) \otimes \bpc(H) \simeq \bpc(G \squarediv H)
    \end{equation}
    and
    \begin{equation}\label{eq:RH_EZ_box}
        \bpc(G) \otimes \bpc(H) \simeq \bpc(G \square H),
    \end{equation}
    natural in \(G\) and \(H\).
\end{theorem}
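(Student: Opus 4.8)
The plan is to deduce both statements from the classical simplicial Eilenberg--Zilber theorem together with the identification of $\bpc_\ast$ as a normalized chain complex of a nerve. Recall from \Cref{sec:RH} that $\bpc_\ast(G) = N(\Nv(G)) = N(\Nv(\Pre(G)))$, so it suffices to produce natural chain homotopy equivalences
\[
    N(\Nv(\Pre(G))) \otimes N(\Nv(\Pre(H))) \simeq N(\Nv(\Pre(G \squarediv H)))
\]
and similarly for $\square$. By \Cref{prop:pre_digraph_monoidal} we have natural isomorphisms $\Pre(G \squarediv H) \cong \Pre(G) \land \Pre(H) \cong \Pre(G \square H)$, so both claims reduce to a single statement about preorders: for preorders $P$ and $Q$, there is a natural chain homotopy equivalence
\[
    N(\Nv(P)) \otimes N(\Nv(Q)) \simeq N(\Nv(P \land Q)).
\]

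First I would observe that the nerve functor $\Nv \colon \cn{Cat} \to \cn{SSet}$ preserves products, so $\Nv(P \land Q) \cong \Nv(P) \times \Nv(Q)$, the product being taken in simplicial sets; this is the standard fact that the nerve is a right adjoint (or can be checked directly on simplices, since a $k$-simplex of $\Nv(P \land Q)$ is a chain in $P \land Q$, which is precisely a pair consisting of a chain in $P$ and a chain in $Q$). Then I would invoke the simplicial Eilenberg--Zilber theorem in its normalized form: for simplicial sets (or simplicial modules) $S$ and $T$ there is a natural chain homotopy equivalence $N(S) \otimes N(T) \simeq N(S \times T)$, realised by the (normalized) shuffle and Alexander--Whitney maps, which are natural and mutually inverse up to natural chain homotopy. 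This is exactly \cite[Theorem 8.5.1]{Weibel} (or the Eilenberg--Zilber theorem as stated there for the normalized complexes). Composing these two steps with the isomorphisms of \Cref{prop:pre_digraph_monoidal} applied to $\Nv$ gives both \eqref{eq:RH_EZ_strong} and \eqref{eq:RH_EZ_box}, and naturality is inherited at each stage.

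The only point requiring a little care is the passage between the two conventions for the normalized complex: \cite[8.3.6--8.3.7]{Weibel} describes $N(S)$ as the quotient of the unnormalized chains by degeneracies, while we defined $\bpc_\ast(G)$ using the non-degenerate simplices as a basis; these two models are canonically isomorphic (and the excerpt already uses this identification), so no essential difficulty arises. I would also remark that the box product case could alternatively be approached via the shortest-path metric and the $\ell_1$-monoidal structure, but routing everything through \Cref{prop:pre_digraph_monoidal} and the classical Eilenberg--Zilber theorem is the shortest path. I do not anticipate a genuine obstacle here: the substance of the theorem is entirely contained in the already-established fact that $\Pre$ does not distinguish the two products (\Cref{prop:pre_digraph_monoidal}) together with the classical Eilenberg--Zilber theorem, and the proof amounts to assembling these two ingredients.
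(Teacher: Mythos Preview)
Your proposal is correct and follows essentially the same route as the paper: reduce to preorders via \Cref{prop:pre_digraph_monoidal}, identify $\Nv(P\land Q)\cong\Nv(P)\times\Nv(Q)$ (the paper checks this directly on simplices, just as you suggest), and then apply the classical normalized Eilenberg--Zilber theorem. The only difference is cosmetic: the paper also writes out the explicit shuffle formula for $\triangledown$, while you add a remark about the two models of the normalized complex.
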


\begin{proof}
    First, let \(P\) and \(P'\) be any preorders. In each dimension \(k\) there is a bijection
    \begin{align*}
    \Nv_k(P \land P') &= \{\left( (p_0, p_0'), \ldots (p_k, p_k')\right) \mid (p_i, p_i') \leq (p_{i+1}, p_{i+1}') \text{ in } P \land P'\} \\
    &= \{\left( (p_0, p_0'), \ldots (p_k, p_k')\right) \mid p_i \leq p_{i+1} \text{ in } P \text{ and } p_i' \leq p'_{i+1} \text{ in } P'\} \\
    &\cong \{ (p_0, \ldots, p_k) \mid p_i \leq p_{i+1} \text{ in } P\} \times \{ (p_0', \ldots, p_k') \mid p'_i \leq p'_{i+1}\text{ in } P'\} \\
    &= \Nv_k(P) \times \Nv_k(P').
    \end{align*}
    These bijections determine an isomorphism of simplicial sets
    \begin{equation}\label{eq:nv_iso}
        \Nv(P) \times \Nv(P') \cong \Nv(P \land P'),
    \end{equation}
    natural in \(P\) and \(P'\).

    Now, given a simplicial set \(S\), let \(N(S)\) denote the normalized complex of chains in \(S\). It is a standard fact of homological algebra that the Eilenberg--Zilber map reduces to a natural chain homotopy equivalence
    \[\triangledown: N(S) \otimes N(S') \xto{\sim} N(S \times S')\]
    for any pair of simplicial sets \(S\) and \(S'\). (A classical reference is Section 5 of \cite{EM1953}.) In our setting, combining this with \eqref{eq:nv_iso} gives a natural chain homotopy equivalence
    \begin{align*}
    \bpc(G) \otimes \bpc(H) &= N(\Nv(\Pre(G))) \otimes N(\Nv(\Pre(H))) \\
    &\xto{\triangledown} N(\Nv(\Pre(G)) \times \Nv(\Pre(H))) \\
    &\xto{\cong} N(\Nv(\Pre(G) \land \Pre(H))).
    \end{align*}
    In light of \Cref{prop:pre_digraph_monoidal}, this says there is a natural chain homotopy equivalence
    \[\triangledown: \bpc(G) \otimes \bpc(H) \to \bpc(G \square H) \cong \bpc(G \squarediv H).\]
    Explicitly, the map \(\triangledown\) is specified by
    \[
        \triangledown
        \left(
        (g_0,\ldots,g_p)
        \otimes
        (h_0,\ldots,h_q)
        \right)
        =
        \sum_\sigma
        \sign(\sigma)
        ((g_{i_0},h_{j_0}),\ldots,(g_{i_r},h_{j_r})) .
    \]
    Here $r=p+q$, and $\sigma$ runs over all sequences
    $((i_0,j_0),\ldots,(i_r,j_r))$ in which 
    $0\leq i_k\leq p$, $0\leq j_k\leq q$,
    and in which each term $(i_{k+1},j_{k+1})$ is obtained from its predecessor $(i_k,j_k)$ by increasing exactly one of
    the components by $1$.
    The coefficient $\sign(\sigma)$ is defined to be
    $(-1)^n$ where $n$ is the number of pairs $(i,j)$ for which
    $i=i_k\implies j< j_k$.
\end{proof}

\begin{theorem}[K\"unneth theorem for reachability homology]\label{thm:RH_kunneth}
Let \(R\) be a P.I.D. and let \(G\) and \(H\) be directed graphs. For each \(k\) there is a natural short exact sequence
\begin{align*}
0 \to \bigoplus_{i+j=k} \bph_i(G;R) \otimes &\bph_j(H;R) \to \bph_k(G \squarediv H;R) \\
& \to \bigoplus_{i+j=k-1} \mathrm{Tor}(\bph_i(G;R),\bph_j(H;R)) \to 0
\end{align*}
and a natural short exact sequence
\begin{align*}
0 \to \bigoplus_{i+j=k} \bph_i(G;R) \otimes &\bph_j(H;R) \to \bph_k(G \square H;R) \\
&\to \bigoplus_{i+j=k-1} \mathrm{Tor}(\bph_i(G;R),\bph_j(H;R)) \to 0.
\end{align*}
Both sequences split, but not naturally.
\end{theorem}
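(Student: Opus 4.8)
The plan is to deduce the Künneth theorem from the Eilenberg--Zilber statement of \Cref{thm:RH_EZ} together with the algebraic Künneth theorem over a P.I.D. First I would observe that, by \Cref{def:BPC}, the reachability complex $\bpc_\ast(G)$ is in each degree a \emph{free} $R$-module and is concentrated in non-negative degrees; hence it is a bounded-below complex of flat $R$-modules. Since $R$ is a P.I.D. (in particular hereditary), its submodules of cycles and boundaries are again flat, so the pair $\bpc_\ast(G)$, $\bpc_\ast(H)$ satisfies the hypotheses of the algebraic Künneth theorem \cite[Theorem~3.6.3]{Weibel}. This produces, for each $k$, a natural short exact sequence
\[
0 \to \bigoplus_{i+j=k} H_i(\bpc_\ast(G)) \otimes H_j(\bpc_\ast(H)) \to H_k(\bpc_\ast(G) \otimes \bpc_\ast(H)) \to \bigoplus_{i+j=k-1} \mathrm{Tor}\bigl(H_i(\bpc_\ast(G)), H_j(\bpc_\ast(H))\bigr) \to 0,
\]
which splits, although not naturally.

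Next I would transport this sequence along the Eilenberg--Zilber equivalences. \Cref{thm:RH_EZ} provides natural chain homotopy equivalences $\bpc_\ast(G) \otimes \bpc_\ast(H) \simeq \bpc_\ast(G \squarediv H)$ and $\bpc_\ast(G) \otimes \bpc_\ast(H) \simeq \bpc_\ast(G \square H)$, and each of these induces a natural isomorphism on homology. Using these isomorphisms to identify $H_k(\bpc_\ast(G) \otimes \bpc_\ast(H))$ with $\bph_k(G \squarediv H)$ (respectively $\bph_k(G \square H)$), and writing $H_i(\bpc_\ast(G)) = \bph_i(G)$ and $H_j(\bpc_\ast(H)) = \bph_j(H)$, rewrites the displayed sequence as the two asserted short exact sequences. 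Naturality in $G$ and $H$ is inherited because every ingredient---the Eilenberg--Zilber map of \Cref{thm:RH_EZ}, the isomorphism $\Pre(G \square H) \cong \Pre(G) \land \Pre(H) \cong \Pre(G \squarediv H)$ of \Cref{prop:pre_digraph_monoidal}, and the maps of the algebraic Künneth sequence---is natural; and the splitting, together with the failure of the splitting to be natural, is exactly what the algebraic Künneth theorem supplies.

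As the argument is a formal composition of \Cref{thm:RH_EZ} with a standard homological input, there is no real obstacle. The only point demanding care is the verification that the hypotheses of the algebraic Künneth theorem genuinely hold---the flatness of the terms of $\bpc_\ast(G)$, immediate from their freeness, and boundedness below, immediate from the grading---so that \cite[Theorem~3.6.3]{Weibel} applies verbatim.
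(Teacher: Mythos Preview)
Your proposal is correct and follows essentially the same route as the paper: apply the algebraic K\"unneth theorem to the freely generated (hence flat) reachability complexes, then replace the middle term using the Eilenberg--Zilber equivalences of \Cref{thm:RH_EZ}. The only cosmetic differences are your citation of Weibel rather than Cartan--Eilenberg and your slightly more explicit check of the flatness and boundedness hypotheses.
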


\begin{proof}
Since the reachability complex is freely generated, hence flat, the algebraic K\"unneth theorem tells us that there is for every \(k\) a natural short exact sequence
\begin{align*}
0 \to \bigoplus_{i+j=k} \bph_i(G;R) \otimes &\bph_j(H;R) \to H_k\left(\bpc(G;R) \otimes \bpc(H;R)\right) \\
&\to \bigoplus_{i+j=k-1} \mathrm{Tor}(\bph_i(G;R),\bph_j(H;R)) \to 0,
\end{align*}
which splits, though not naturally. (A statement and proof of the algebraic K\"unneth theorem can be found in~\cite[VI.3.3]{CartanEilenberg};
the statement there is for hereditary rings,
but this includes P.I.D.s as on~\cite[pp.13]{CartanEilenberg}.)

Meanwhile, \Cref{thm:RH_EZ} implies that there are natural isomorphisms
\[\bph_*(G \squarediv H) \cong H_*\left(\bpc(G;R) \otimes \bpc(H;R)\right) \cong \bph_*(G \square H).\]
Using the first of these isomorphisms to replace the middle term in the algebraic K\"unneth sequence yields the K\"unneth sequence for the strong product; using the second yields the K\"unneth sequence for the box product.
\end{proof}


\section{A Mayer--Vietoris sequence}\label{sec:RH_MV}

The magnitude homology of undirected graphs is known to satisfy an excision and a Mayer--Vietoris theorem with respect to a class of subgraph inclusions known as \emph{projecting decompositions} \cite[Theorem 6.6]{HepworthWillerton2017}. Recently, it has been shown by Carranza \emph{et al} that \(\cn{DiGraph}\) carries a cofibration category structure in which the weak equivalences are maps inducing isomorphisms on path homology, and the cofibrations are projecting decompositions satisfying a certain additional condition \cite[Definition 2.8]{CDKOSW}. In particular, they prove that the path homology of directed graphs satisfies an excision theorem with respect to these cofibrations \cite[\S 3]{CDKOSW}. In~\cite[\S 6]{HepworthRoff2024} it is proved that this excision theorem (and its associated Mayer--Vietoris theorem) extends to bigraded path homology, and that the cofibration category structure in \cite{CDKOSW} has a natural refinement, for which the cofibrations are the same but the weak equivalences are maps inducing isomorphisms on bigraded path homology~\cite[\S 7]{HepworthRoff2024}.

Here we prove an excision theorem, and associated Mayer--Vietoris theorem, for reachability homology. We begin by introducing the `good pairs' of directed graphs for which the main results will hold; these, we call \emph{long cofibrations} (\Cref{def:long_cofib}). The definition of a long cofibration is a relaxation of the notion of cofibration introduced in~\cite{CDKOSW} and studied in~\cite{HepworthRoff2024}. That definition is phrased in terms of minimal paths; ours is obtained by considering only the \emph{existence} of paths, and not their lengths. For convenience we also dualize things, so that if \(A \hookrightarrow G\) is a cofibration in the sense of \cite{CDKOSW}, then corresponding map between the transpose graphs---in which the direction of every edge has been reversed---is a long cofibration. This dualization allows us to give an alternative characterization of long cofibrations, as precisely those subgraph inclusions which induce \emph{Dwyer morphisms} upon passing from directed graphs to preorders by taking the reachability relation (\Cref{prop:cofib_dwyer}).

Dwyer morphisms are a special class of functors which played a central role in the development of the homotopy theory of small categories due to their convenient properties with respect to pushouts (the classical account of this is Thomason \cite{Thomason1980}). We are able to exploit those properties to prove a homotopy-theoretic excision theorem for long cofibrations (\Cref{thm:excision_nerve}), from which our Mayer--Vietoris theorem follows (\Cref{thm:excision_MV_reach}).

Throughout this section the term `path' should always be understood to mean `directed path'.

\begin{definition}
    Let \(X\) be a directed graph and \(A \subseteq X\) a subgraph. The \demph{reach} of \(A\), denoted \(rA\), is the induced subgraph of \(X\) on the set of all vertices which admit a path from some vertex of \(A\).
\end{definition}

\begin{definition}\label{def:long_cofib}
    A \demph{long cofibration} of directed graphs is an induced subgraph inclusion \(A \hookrightarrow X\) for which the following conditions hold:
    \begin{enumerate}
        \item There are no paths in \(X\) from vertices not in \(A\) to vertices in \(A\).\label{cond:long_cofib1}
        \item For each \(x \in rA\) there is a vertex \(\pi(x) \in A\) with the property that, for each \(a \in A\), the vertex \(a\) admits a path to \(x\) if and only if it admits a path to \(\pi(x)\). \label{cond:long_cofib2}
    \end{enumerate}
\end{definition}

Before stating the theorem, let us make a few remarks on the definition. First, condition (\ref{cond:long_cofib1}) is equivalent to requiring that there are no \emph{edges} from vertices not in \(A\) to vertices in \(A\). It implies that the subgraph \(A\) possesses a weak sort of `convexity': if \(a,b \in A\) are such that \(a\) admits a path to \(b\) in \(X\), then \(a\) admits a path to \(b\) in \(A\). In terms of the shortest path metric, this amounts to saying that for \(a,b \in A\) we have \( d_X(a,b) < \infty\) if and only if \(d_A(a,b) < \infty\).

Second, by taking \(a=\pi(x)\) in condition (\ref{cond:long_cofib2}), we see that for each \(x \in rA\) there is a path from \(\pi(x)\) to \(x\). If \(x\) is in \(A\) then, taking \(a = x\) in condition (\ref{cond:long_cofib2}), we see that there is also a path from \(x\) to \(\pi(x)\).

Third, the assignment $x\mapsto \pi(x)$  is not uniquely determined: each $\pi(x)$ can be replaced by any other vertex that it can reach and that can reach it. In particular, the previous remark implies that we are free to choose \(\pi(a) = a\) for every \(a \in A\). Finally, there is no guarantee that $\pi$ forms a map of directed graphs, or indeed that it \emph{can} form a map of directed graphs.

\begin{remark}   
    The use of the terminology `long cofibration' is not intended to indicate (at this stage) that this class of maps itself
    forms the class of cofibrations in a homotopy-theoretical 
    framework such as a model category structure.
    Instead, we intend to emphasize that the class of long cofibrations contains all cofibrations in the cofibration category structures exhibited in~\cite{CDKOSW} and \cite{HepworthRoff2024}.
\end{remark}

The main theorem in this section runs as follows. To state it, we note that the category \(\cn{DiGraph}\) admits all small limits and colimits, including in particular all pushouts. (This is explained, for instance, in the paragraph preceding Lemma 1.10 in \cite{CDKOSW}.)

\begin{theorem}[Excision and Mayer--Vietoris]\label{thm:excision_MV_reach}
    Let \(i: A \to X\) be a long cofibration, and \(f: A \to Y\) any map of directed graphs. Consider their pushout in \(\cn{DiGraph}\):
    \[\xymatrix{
        A\ar[r]^f\ar[d]_i & Y\ar[d]^j
        \\
        X\ar[r]_-g & X\cup_A Y
    }\]
    We have \emph{excision} on reachability homology, meaning that the map
    \[
        g_\ast\colon \bph_\ast(X,A)
        \xrightarrow{\ \cong\ }
        \bph_\ast(X\cup_A Y,Y) 
    \]
    is an isomorphism. We also have the \emph{Mayer-Vietoris sequence}, meaning that there is a long exact sequence:
    \[
        \cdots
        \to
        \bph_\ast(A)
        \xrightarrow{(f_\ast,-i_\ast)}
        \bph_\ast(X)\oplus\bph_\ast(Y)
        \xrightarrow{g_\ast\oplus j_\ast}
        \bph_\ast (X\cup_A Y)
        \xrightarrow{\partial_\ast}
        \bph_{\ast-1}(A)
        \to
        \cdots
    \]
\end{theorem}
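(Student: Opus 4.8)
The plan is to reduce the entire theorem to a statement about nerves of preorders, and then invoke the classical theory of Dwyer morphisms. First I would record the characterization promised in the introduction (``$A \hookrightarrow X$ is a long cofibration if and only if $\Pre(A) \hookrightarrow \Pre(X)$ is a Dwyer morphism of preorders''), which is essentially a translation exercise: condition (\ref{cond:long_cofib1}) says $\Pre(A)$ is a \emph{sieve}-like (cosieve-like, after dualization) subcategory of $\Pre(X)$ in the appropriate sense, i.e.\ the inclusion is a cofibration of posets in Thomason's sense, while condition (\ref{cond:long_cofib2}) supplies the retraction-up-to-natural-transformation data (the assignment $x \mapsto \pi(x)$ on the reach, with the universal property of $\pi(x)$ exactly encoding that $rA \to A$ is right adjoint to the inclusion $A \hookrightarrow rA$ in the preorder, after passing to the skeleton). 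The key input here is that $\Pre$ is a left adjoint (\Cref{lem:Pre_adjoint}), so it preserves pushouts: applying $\Pre$ to the pushout square in $\cn{DiGraph}$ yields a pushout square of preorders, with $\Pre(i)$ a Dwyer morphism.

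Next I would establish the homotopy-theoretic excision statement at the level of simplicial sets: for a pushout of preorders (equivalently small categories) along a Dwyer morphism, the induced square of nerves is homotopy cocartesian, so that the map of quotients $\Nv(X)/\Nv(A) \to \Nv(X \cup_A Y)/\Nv(Y)$ is a weak equivalence. This is precisely the content of the result attributed to Thomason~\cite{Thomason1980} on the good behaviour of Dwyer morphisms under pushout (together with the fact that the nerve of a Dwyer morphism is a cofibration of simplicial sets, so pushouts along it are homotopy pushouts); the excision theorem \Cref{thm:excision_nerve} referenced in the text is exactly this, and I would cite it. Passing to normalized chains and homology then gives the excision isomorphism $g_\ast\colon \bph_\ast(X,A) \xrightarrow{\cong} \bph_\ast(X \cup_A Y, Y)$, since $\bpc_\ast(G,A)$ computes the homology of $\Nv(X)/\Nv(A)$ and a weak equivalence of simplicial sets induces an isomorphism on homology.

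With excision in hand, the Mayer--Vietoris sequence is a formal consequence. I would consider the commuting diagram of short exact sequences of chain complexes
\[
\begin{tikzcd}
0 \ar[r] & \bpc_\ast(A) \ar[r] \ar[d] & \bpc_\ast(X) \ar[r] \ar[d] & \bpc_\ast(X,A) \ar[r] \ar[d, "g_\ast"] & 0 \\
0 \ar[r] & \bpc_\ast(Y) \ar[r] & \bpc_\ast(X\cup_A Y) \ar[r] & \bpc_\ast(X\cup_A Y, Y) \ar[r] & 0
\end{tikzcd}
\]
(the vertical map on the right induces an isomorphism on homology by excision). Splicing the two associated long exact sequences using this isomorphism --- the standard ``Barratt--Whitehead'' or braid-of-a-square argument --- produces the Mayer--Vietoris long exact sequence with the stated maps $(f_\ast,-i_\ast)$ and $g_\ast \oplus j_\ast$. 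One small point to check is that $\bpc_\ast(A) \to \bpc_\ast(X)$ and $\bpc_\ast(Y) \to \bpc_\ast(X \cup_A Y)$ really are the degreewise-split injections required for the short exact sequences, which follows because $A \hookrightarrow X$ is an induced subgraph inclusion and, by condition (\ref{cond:long_cofib1}) together with \Cref{prop:pre_digraph_monoidal}-type reasoning, $\Nv(A) \hookrightarrow \Nv(X)$ is a monomorphism of simplicial sets (likewise for $Y \hookrightarrow X\cup_A Y$, using that $\Pre$ preserves the pushout).

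The main obstacle is the first step: verifying carefully that the combinatorial conditions (\ref{cond:long_cofib1})--(\ref{cond:long_cofib2}) defining a long cofibration translate \emph{exactly} into the Dwyer-morphism conditions on $\Pre(i)$, including handling the fact that $\Pre(X)$ is only a preorder and not a poset (so one must either pass to skeleta throughout, as licensed by \Cref{rmk:acyclic_poset}, or check that the relevant statements about Dwyer morphisms hold for preorders verbatim), and checking that $\pi$ need not be functorial yet still yields the required adjunction on the quotient poset. Once this dictionary is pinned down, everything else is an application of known homotopy theory of categories plus routine homological algebra.
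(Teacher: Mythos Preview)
Your proposal is correct and follows essentially the same route as the paper: characterize long cofibrations as those maps inducing Dwyer morphisms on $\Pre$ (this is the paper's \Cref{prop:cofib_dwyer}), use that $\Pre$ is a left adjoint to preserve the pushout, invoke Thomason's result on pushouts along Dwyer morphisms (the paper's \Cref{thm:dwyer_props} and \Cref{thm:excision_nerve}), and then pass to chains.

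The only genuine difference is in how Mayer--Vietoris is extracted. You derive it from excision via the Barratt--Whitehead splice of the two long exact sequences of pairs $(X,A)$ and $(X\cup_A Y,Y)$; the paper instead uses the single short exact sequence
\[
0 \to \bpc_\ast(A) \xrightarrow{(f_\ast,-i_\ast)} \bpc_\ast(X)\oplus\bpc_\ast(Y) \to \bpc_\ast(X)\oplus_{\bpc_\ast(A)}\bpc_\ast(Y) \to 0
\]
and replaces the third term by $\bpc_\ast(X\cup_A Y)$ using the quasi-isomorphism coming from \Cref{thm:excision_nerve}. Both arguments are standard and equivalent; the paper's version is marginally more direct since it avoids re-deriving MV from excision. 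One small slip: your appeal to ``\Cref{prop:pre_digraph_monoidal}-type reasoning'' for the injectivity of $\Nv(A)\hookrightarrow\Nv(X)$ is misplaced (that proposition concerns products); the correct reason is simply that condition~(\ref{cond:long_cofib1}) makes $\Pre(A)\hookrightarrow\Pre(X)$ a full sub-preorder, so its nerve is a sub-simplicial-set.
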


The proof of \Cref{thm:excision_MV_reach} will occupy the remainder of the section. First, though, we return to the example of simplicial
complexes.

\begin{example}\label{ex:simplicial-cofibration}
    Let $Q$ be a simplicial complex, and let $P\subseteq Q$ be a
    subcomplex.
    Then the induced maps of face graphs and Hasse diagrams
    \[
        \calF_P\to\calF_Q,
        \qquad
        \calG_P\to\calG_Q
    \]
    (see Example~\ref{eg:face_graphs_RH})
    are long cofibrations if and only if $P$ is a \emph{full}
    subcomplex, or in other words, if and only if a simplex
    of $Q$ lies in $P$ whenever all of its vertices do,
    as we now explain.
    We deal with the case of $\calG_{(-)}$ for simplicity,
    but the explanation for $\calF_{(-)}$ is identical.
    
    Recall that if $S$ is a simplicial complex
    then the vertices of $\calG_S$ are simply the simplices of $S$,
    and observe that in $\calG_S$ there is a path from $\sigma$ to 
    $\tau$ if and only if $\sigma\subseteq\tau$.
    With this interpretation in hand, 
    the conditions for the map $\calF_P\to\calF_Q$
    to be a long cofibration then amount to the following:
    \begin{itemize} 
        \item
        Condition~\eqref{cond:long_cofib1} states that,
        if $\tau$ is a simplex of $P$ 
        and $\sigma$ is a simplex not in $P$, 
        then $\sigma$ is not a face of $\tau$. 

        \item
        Observe that  the reach $r\calG_P\subseteq\calG_Q$ 
        consists of all simplices $\sigma$ of $Q$
        that contain at least one vertex of $P$.
        Condition~\eqref{cond:long_cofib2} therefore
        requires, for each such $\sigma$, the existence of a simplex $\pi(\sigma)$
        of $P$ with the property that a simplex $\tau$ of $P$
        is contained in $\sigma$ 
        if and only if it is contained in $\pi(\sigma)$.
    \end{itemize}
    The first condition holds in all cases 
    by the very definition of subcomplex.
    The second condition requires $P$ to be a full subcomplex,
    as one sees by considering a simplex of $Q$ whose vertices all 
    lie in $P$. 
    And when $P$ is a full subcomplex, 
    the second condition holds by defining $\pi(\sigma)=P\cap\sigma$ for $\sigma\in r\calG_P$.
    
    Suppose now that $X$ is a simplicial complex with
    subcomplexes $A,B\subseteq X$, such that
    $X=A\cup B$. 
    Then we obtain the following pushout diagrams of graphs:
    \[
    \xymatrix{
        \calF_{A\cap B}\ar[r]\ar[d] & \calF_B\ar[d]
        \\
        \calF_A\ar[r] & \calF_X
    }
    \qquad\qquad
    \xymatrix{
        \calG_{A\cap B}\ar[r]\ar[d] & \calG_B\ar[d]
        \\
        \calG_A\ar[r] & \calG_X
    }
    \]
    Assuming that $A\cap B$ is full in $A$,
    the two pushout squares then give us excision and
    Mayer--Vietoris theorems in reachability homology.
    However, as in~\Cref{eg:face_graphs_RH}, the reachability homology
    of the face graph and Hasse diagram of a simplicial complex
    is precisely the simplicial homology of that complex,
    so that in this case~\Cref{thm:excision_MV_reach} recovers the 
    excision and Mayer--Vietoris theorems for simplicial homology.
\end{example}

We turn now to the proof of \Cref{thm:excision_MV_reach}. The proof rests on the fact---established below in \Cref{prop:cofib_dwyer}---that a long cofibration is precisely a map of directed graphs that induces, on taking the reachability relation (and passing from preorders to categories), a {Dwyer morphism}. A Dwyer morphism is a subcategory inclusion with properties analogous to those of the inclusion of a deformation retract of spaces. Since we will only deal directly with Dwyer morphisms between preorders, we spell out the definition only in that special case. For the general definition, see \cite[Definition 4.1]{Thomason1980} and \cite[D\'efinition 1]{Cisinski}. (In the case of preorders, these two definitions coincide.)

Recall that \emph{sub-preorder} of a preorder \(Q\) is a subset \(P \subseteq Q\) with preorder inherited from \(Q\). A monotone map \(P \hookrightarrow Q\) is the inclusion of a sub-preorder if and only if, as a functor, it is fully faithful and injective on objects---in other words, the inclusion of a full subcategory.

\begin{definition}\label{def:dwyer}
A \demph{Dwyer morphism} of preorders is the inclusion of a sub-preorder \(P \hookrightarrow Q\) which factors through another sub-preorder \(U \hookrightarrow Q\) where
\begin{enumerate}
    \item whenever \(x \in Q\) and \(y \in P\) are such that \(x \leq y\), then \(x \in P\); \label{cond:dwyer1}
    \item whenever \(x \in Q\) and \(u \in U\) are such that \(u \leq x\), then \(x \in U\); \label{cond:dwyer2}
    \item there is a monotone retraction \(p: U \to P\) such that \(p(u) \leq u\) for all \(u \in U\).\label{cond:dwyer3}
\end{enumerate}
\end{definition}

\begin{proposition}\label{prop:cofib_dwyer}
    A map of directed graphs \(f: A \to X\) is a long cofibration if and only if \(\Pre(f): \Pre(A) \to \Pre(X)\) is a Dwyer morphism.
\end{proposition}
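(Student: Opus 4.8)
The plan is to verify the two implications of the biconditional by unwinding the definitions and matching the data. Throughout, I will use the identification (from Remark~\ref{rmk:trans_clos} and the construction of $\Pre$) that in $\Pre(X)$ one has $a \leq x$ precisely when there is a directed path from $a$ to $x$ in $X$, and that $\Pre$ sends an induced subgraph inclusion $A \hookrightarrow X$ to the inclusion of the sub-preorder $\Pre(A) \hookrightarrow \Pre(X)$ carrying the inherited preorder (this uses condition~\eqref{cond:long_cofib1} in one direction, and is automatic in the other—see below). The candidate intermediate sub-preorder $U$ in Definition~\ref{def:dwyer} will of course be $\Pre(rA)$, the reachability relation of the reach of $A$, and the candidate retraction $p \colon U \to P$ will be induced by the function $\pi$ from condition~\eqref{cond:long_cofib2}.

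First I would treat the forward implication. Assume $f \colon A \hookrightarrow X$ is a long cofibration. I must check the three conditions of Definition~\ref{def:dwyer} for the factorisation $\Pre(A) \hookrightarrow \Pre(rA) \hookrightarrow \Pre(X)$. Condition~\eqref{cond:dwyer1}—if $x \in \Pre(X)$, $y \in \Pre(A)$ and $x \leq y$, then $x \in \Pre(A)$—is exactly a restatement of condition~\eqref{cond:long_cofib1}: $x \leq y$ means there is a path from $x$ to $y$, and condition~\eqref{cond:long_cofib1} forbids such a path unless $x \in A$. (One should also note here that condition~\eqref{cond:long_cofib1} is what guarantees $\Pre(A)$ is a \emph{full} sub-preorder of $\Pre(X)$, i.e.\ that a path in $X$ between two vertices of $A$ already exists in $A$—this was observed in the paragraph after Definition~\ref{def:long_cofib}.) Condition~\eqref{cond:dwyer2}—if $u \in \Pre(rA)$ and $u \leq x$ in $\Pre(X)$, then $x \in \Pre(rA)$—follows because $u \in rA$ means some $a \in A$ reaches $u$, and $u$ reaches $x$, so by transitivity $a$ reaches $x$, whence $x \in rA$. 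Condition~\eqref{cond:dwyer3} is the substantive point: I take $p$ to be $x \mapsto \pi(x)$, choosing $\pi(a) = a$ for $a \in A$ as permitted by the third remark after Definition~\ref{def:long_cofib}, so that $p$ is a retraction onto $\Pre(A)$. I must check (i) $p$ is monotone, and (ii) $p(u) \leq u$ for all $u$. For (ii): by the second remark after Definition~\ref{def:long_cofib}, $\pi(x)$ admits a path to $x$, i.e.\ $\pi(x) \leq x$. For (i): suppose $u \leq u'$ in $\Pre(rA)$; I must show $\pi(u) \leq \pi(u')$ in $\Pre(A)$. Pick any $a \in A$ with a path to $u$; by the defining property of $\pi(u)$ in condition~\eqref{cond:long_cofib2}, $a$ reaches $\pi(u)$ iff $a$ reaches $u$, so—applying this with $a = \pi(u)$ and using $\pi(u) \leq \pi(u)$—we get $\pi(u)$ reaches $u$, hence reaches $u'$, hence (by the defining property of $\pi(u')$) reaches $\pi(u')$. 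That is $\pi(u) \leq \pi(u')$. So $\Pre(f)$ is a Dwyer morphism.

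For the converse, assume $\Pre(f) \colon \Pre(A) \hookrightarrow \Pre(X)$ is a Dwyer morphism, witnessed by an intermediate sub-preorder $U$ and retraction $p \colon U \to \Pre(A)$ with $p(u) \leq u$. I must recover conditions~\eqref{cond:long_cofib1} and~\eqref{cond:long_cofib2}. Condition~\eqref{cond:long_cofib1} is immediate from condition~\eqref{cond:dwyer1} as above: a path in $X$ from $x \notin A$ to $y \in A$ would give $x \leq y$ with $y \in \Pre(A)$, forcing $x \in \Pre(A)$, a contradiction. For condition~\eqref{cond:long_cofib2}, I first argue that $U$ must contain $rA$: if $x \in rA$ then some $a \in A$ reaches $x$, i.e.\ $a \leq x$ with $a \in U$ (since $\Pre(A) \subseteq U$), so by condition~\eqref{cond:dwyer2} $x \in U$. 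Then for $x \in rA \subseteq U$ I set $\pi(x) := p(x) \in \Pre(A) = V(A)$ and verify the required property: for $a \in A$, if $a$ reaches $x$ then $a \leq x$ in $\Pre(X)$, and since $\Pre(A) \hookrightarrow U$ is full and $p$ is a retraction we can run the argument—$a = p(a) \leq p(x) = \pi(x)$ using monotonicity of $p$—to get that $a$ reaches $\pi(x)$; conversely if $a$ reaches $\pi(x)$ then, since $\pi(x) = p(x) \leq x$, transitivity gives that $a$ reaches $x$. This establishes condition~\eqref{cond:long_cofib2}. (One should double-check the edge case: taking $a = x \in A$ in the second remark reasoning confirms $x$ and $\pi(x)$ are mutually reachable when $x \in A$, consistent with $p$ being a retraction.)

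The main obstacle is purely one of bookkeeping: correctly translating the asymmetric ``iff'' in condition~\eqref{cond:long_cofib2} into the monotonicity-plus-retraction data of a Dwyer morphism and back, and being careful that $\Pre$ of an induced subgraph inclusion genuinely lands the smaller preorder as a \emph{full} sub-preorder (which is where condition~\eqref{cond:long_cofib1}, equivalently condition~\eqref{cond:dwyer1}, does real work). There is no deep idea here beyond matching up the universal properties, but the argument should be written so that each of the three Dwyer conditions is explicitly paired with the corresponding feature of a long cofibration.
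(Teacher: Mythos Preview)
Your proposal is correct and follows essentially the same approach as the paper: you use $U=\Pre(rA)$ and $p=\pi$ in the forward direction, and $\pi=p|_{rA}$ in the converse, verifying the three Dwyer conditions against the two long-cofibration conditions just as the paper does. Your write-up is slightly more detailed in places (e.g.\ you spell out why $\Pre(rA)$ satisfies condition~\eqref{cond:dwyer2} and why condition~\eqref{cond:long_cofib1} makes $\Pre(A)$ a full sub-preorder), but the argument is the same.
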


\begin{proof}
    Observe that \(f\) satisfies condition (\ref{cond:long_cofib1}) in the definition of a long cofibration if and only if \(\Pre(f)\) satisfies condition (\ref{cond:dwyer1}) in the definition of a Dwyer morphism. The task is to show that conditions (\ref{cond:dwyer2}) and (\ref{cond:dwyer3}) in \Cref{def:dwyer} are together equivalent to condition (\ref{cond:long_cofib2}) in \Cref{def:long_cofib}.
    
    Suppose \(f\) is a long cofibration. Condition (\ref{cond:dwyer2}) for Dwyer morphisms is satisfied by \(U = \Pre(rA)\), and according to condition (\ref{cond:long_cofib2}) for long cofibrations (and the remarks following \Cref{def:long_cofib}) we can choose a function \(\pi: V(rA) \to V(A)\) such that \(\pi(a) = a\) for all \(a \in V(A)\). This function may not be a map of directed graphs---yet it will induce a monotone map \(\Pre(\pi): \Pre(rA) \to \Pre(A)\) satisfying condition (\ref{pt:dwyer3}) for Dwyer morphisms. Indeed, we have seen that for each \(x \in V(rA)\) the vertex \(\pi(x)\) admits a path to \(x\), which says that \(\pi(x) \leq x\) in \(\Pre(rA)\), so we just need to verify that \(\Pre(\pi)\) is monotone. Take \(x,y \in rA\)  such that \(x \leq y\) in \(\Pre(rA)\); that is, there exists a path from \(x\) to \(y\) in \(rA\). Since \(\pi(x)\) admits a path to \(x\), it admits a path to \(y\), and as \(\pi(x)\) belongs to \(A\), condition (\ref{cond:long_cofib2}) for long cofibrations says \(\pi(x)\) must admit a path to \(\pi(y)\). Thus, \(\pi(x) \leq \pi(y)\) in \(\Pre(A)\).

    Conversely, suppose \(\Pre(f)\) is a Dwyer morphism. Choose a factorization
    \[\Pre(A) \hookrightarrow U \hookrightarrow \Pre(X)\]
    and a retraction \(p: U \to \Pre(A)\) satisfying conditions (\ref{cond:dwyer2}) and (\ref{cond:dwyer3}) for Dwyer morphisms. Condition (\ref{cond:dwyer2}) 
    says there are no paths in \(X\) from vertices belonging to \(U\) to vertices outside \(U\). As \(U\) contains all vertices in \(A\), it must contain all vertices that can be reached in \(X\) by paths from \(A\); thus, \(rA \subseteq U\).  We claim that \(\pi = p|_{rA}\) satisfies condition (\ref{cond:long_cofib2}) for long cofibrations. For this, we need to see that for each \(x \in V(rA)\) and \(a \in V(A)\), the vertex \(a\) admits a path to \(x\) if and only if it admits a path to \(\pi(x)\). Take \(x \in V(rA)\) and \(a \in A\). That \(\pi(x) \leq x\) in \(U\) says \(\pi(x)\) admits a path to \(x\), so, given a path from \(a\) to \(\pi(x)\), we can construct one from \(a\) to \(x\). On the other hand, given a path from \(a\) to \(x\) we have \(a \leq x\) in \(\Pre(rA)\), and monotonicity of \(p\) gives \(\pi(a) \leq \pi(x)\); that is, there exists a path from \(a = \pi(a)\) to \(\pi(x)\).
\end{proof}

The importance of Dwyer morphisms in the homotopy theory of small categories derives from their good properties with respect to pushouts. The next statement summarizes the facts that are relevant to our story.

\begin{theorem}\label{thm:dwyer_props}
Let \(A, X\) and \(Y\) be preorders, \(i: A \hookrightarrow X\) a Dwyer morphism, and \(f: A \to Y\) any monotone map. Consider their pushout in \(\cn{Cat}\):
\[
\begin{tikzcd}
    A \arrow{r}{f} \arrow[hookrightarrow,swap]{d}{i} & Y \arrow{d}{j} \\
    X \arrow[swap]{r}{g} & X \cup_A Y
\end{tikzcd}
\]
The following facts hold:
\begin{enumerate}
        \item The category \(X \cup_A Y\) is a preorder. \label{pt:dwyer1}
        \item The map \(j: Y \to X \cup_A Y\) is a Dwyer morphism.  \label{pt:dwyer2}
        \item The natural map \(\Nv(X) \cup_{\Nv(A)} \Nv(Y) \to \Nv(X \cup_A Y)\) is a weak equivalence.  \label{pt:dwyer3}
    \end{enumerate}
\end{theorem}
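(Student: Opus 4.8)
The plan is to deduce all three statements from a single structural fact about the pushout category $X\cup_A Y$ (formed in $\cn{Cat}$): an explicit description of its morphisms. Fix a Dwyer factorisation $A\hookrightarrow U\hookrightarrow X$ with retraction $p\colon U\to A$ as in \Cref{def:dwyer}, and write $q_X,q_Y$ for the two maps on object sets. First I would observe that, since $A$ is a sieve in $X$ (condition \ref{cond:dwyer1}), any morphism of $X$ whose target lies in $A$ is already a morphism of $A$. A standard zig-zag reduction in the generators-and-relations presentation of the pushout then shows that every morphism of $X\cup_A Y$ can be rewritten with all of its $Y$-factors preceding all of its $X$-factors, the switch (if present) occurring at an object in the common image of $A$. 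In other words, a morphism $z\to z'$ is a morphism of $X$ (if $z,z'\in\operatorname{im}q_X$), a morphism of $Y$ (if $z,z'\in\operatorname{im}q_Y$), or a composite $\chi\circ\psi$ with $\psi$ from $Y$, $\chi$ from $X$, factoring through some $q_X(a)$, $a\in A$.

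For \ref{pt:dwyer1} it remains to see that each hom-set has at most one element, i.e.\ that the composite $\chi\circ\psi$ is independent of the choices made. Here I would use the retraction $p$: if $\psi\colon q_Y(y)\to q_X(a)$ and $\chi\colon q_X(a)\to q_X(x')$, then $a\le x'$ forces $x'\in U$, and applying $p$ gives $a\le p(x')$ (equivalently, this is the defining property of the vertices $\pi(x)$ in \Cref{def:long_cofib} \ref{cond:long_cofib2}), so $\chi$ factors in $X$ through $q_X(p(x'))$; the initial leg of that factorisation is a morphism of $A$, hence of $Y$, and absorbing it into $\psi$ rewrites $\chi\circ\psi$ in terms depending only on $y$, $x'$ and the fixed choice $p(x')$. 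Hence $X\cup_A Y$ is a preorder, so it computes the pushout in $\cn{PreOrd}$ as well.

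For \ref{pt:dwyer2} the plan is to use the factorisation $Y\hookrightarrow V\hookrightarrow X\cup_A Y$, where $V=\operatorname{im}q_Y\cup q_X(U)$. The morphism description makes it immediate that $Y$ is a sieve in $X\cup_A Y$ and that $V$ is a cosieve (using that $U$ is a cosieve in $X$), so conditions \ref{cond:dwyer1} and \ref{cond:dwyer2} hold; the retraction required by \ref{cond:dwyer3} is the identity on $Y$ together with $q_X(u)\mapsto q_Y(f(p(u)))$ on $q_X(U)$, which is well defined (since $p$ fixes $A$), monotone (by monotonicity of $p$ and $f$ together with the order description), a retraction, and satisfies $\rho(v)\le v$ exactly because $p(u)\le u$. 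In particular this exhibits $Y$ as a coreflective subcategory of $V$, so $\Nv(Y)\to\Nv(V)$ is a weak equivalence; I will reuse this.

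Part \ref{pt:dwyer3} is the main obstacle. Since $A$ is a full sub-preorder of $X$, the map $\Nv(A)\to\Nv(X)$ is a monomorphism, hence a cofibration, so $\Nv(X)\cup_{\Nv(A)}\Nv(Y)$ already computes the homotopy pushout; the content of \ref{pt:dwyer3} is that $\Nv(X\cup_A Y)$ does too. Write $P=\Nv(X)\cup_{\Nv(A)}\Nv(Y)$ for the subcomplex of $\Nv(X\cup_A Y)$ consisting of chains lying entirely in $\operatorname{im}q_X$ or entirely in $\operatorname{im}q_Y$. Using the morphism description together with \ref{pt:dwyer2} (that $Y$ is a sieve), one checks that every chain not in $P$ lies in $\Nv(V)$ — its $X$-part is forced into $q_X(U)$ — and that $P\cap\Nv(V)$ is the subcomplex $P'=\Nv(U)\cup_{\Nv(A)}\Nv(Y)$; hence $\Nv(X\cup_A Y)=P\cup_{P'}\Nv(V)$ is a pushout of simplicial sets along the cofibration $P'\hookrightarrow P$. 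It therefore suffices to prove that $P'\hookrightarrow\Nv(V)$ is a weak equivalence, and I would obtain this by two-out-of-three from the commuting triangle $\Nv(Y)\to P'\hookrightarrow\Nv(V)$: the right-hand map is a weak equivalence by the coreflection $\rho$ from \ref{pt:dwyer2}, and the left-hand map is one because $\Nv(A)\hookrightarrow\Nv(U)$ is a monomorphism and a weak equivalence (the latter via the coreflection $p\colon U\to A$), hence a trivial cofibration, of which $\Nv(Y)\to P'$ is a cobase change. The delicate point — and the likeliest place to slip — is the combinatorial claim that every non-degenerate simplex of $\Nv(X\cup_A Y)$ outside $P$ lies in $\Nv(V)$, which forces careful bookkeeping with the morphisms of the pushout category; this is the preorder-level shadow of Thomason's analysis of pushouts along Dwyer maps \cite{Thomason1980}, and restricting throughout to preorders is what keeps the argument elementary and avoids the subtleties that arise in general $\cn{Cat}$.
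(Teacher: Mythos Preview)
The paper does not prove this theorem directly: it simply invokes Thomason~\cite{Thomason1980}, observing that his Proposition~4.3 and Lemma~5.6(4) (stated for posets) go through for preorders. Your proposal instead reproduces Thomason's analysis in the preorder setting. Your arguments for~\ref{pt:dwyer1} and~\ref{pt:dwyer2} are correct, and the overall strategy for~\ref{pt:dwyer3}---decompose $\Nv(X\cup_A Y)$ as a pushout $P\cup_{P'}\Nv(V)$ along the cofibration $P'\hookrightarrow P$, then use left properness and the coreflections $p$ and $\rho$---is sound.

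There is, however, a genuine slip in your execution of~\ref{pt:dwyer3}. You assert that the comparison map $P=\Nv(X)\cup_{\Nv(A)}\Nv(Y)\to\Nv(X\cup_A Y)$ is injective, identifying $P$ with ``the subcomplex of chains lying entirely in $\operatorname{im}q_X$ or entirely in $\operatorname{im}q_Y$'', and you then derive the pushout formula by computing $P\cap\Nv(V)=P'$. But injectivity fails whenever $f$ is not injective on objects. For instance, take $A=\{a\leq a'\}$, $X=\{a\leq a'\leq x\}$ with $U=X$ and $p(x)=a'$, and $f\colon A\to\{y\}$ constant; then $(a,x)$ and $(a',x)$ are distinct $1$-simplices of $\Nv(X)\setminus\Nv(A)$, hence distinct in $P$, yet both map to $(y,x)$ in $\Nv(X\cup_A Y)$. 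The same issue afflicts the map $P'\to\Nv(V)$, so your intersection argument does not stand.

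The pushout identity you need is nevertheless correct. Rewriting the iterated pushout gives $P\cup_{P'}\Nv(V)\cong\Nv(X)\cup_{\Nv(U)}\Nv(V)$, and one can verify directly from your morphism description that the natural map $\Nv(X)\cup_{\Nv(U)}\Nv(V)\to\Nv(X\cup_A Y)$ is a bijection on simplices: the key observation is that $X\setminus U$ is a \emph{sieve} in $X\cup_A Y$ complementary to the cosieve $V$, so every chain either lies in $\Nv(V)$ or lies entirely in $X\setminus A$ (hence in the injective image of $\Nv(X)\setminus\Nv(U)$), and these two pieces meet exactly along the image of $\Nv(U)$. With this repair the remainder of your argument goes through unchanged.
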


\begin{proof}
    Thomason proves in \cite[Lemma 5.6, statement 4]{Thomason1980} that if \(A\), \(X\) and \(Y\) are posets and \(i\) is a Dwyer morphism, then \(X \cup_A Y\) is a poset; the argument he outlines also suffices to prove our statement (\ref{pt:dwyer1}). Statements (\ref{pt:dwyer2}) and (\ref{pt:dwyer3}) follow immediately from Proposition 4.3 in the same paper.
\end{proof}

Both the excision theorem and the Mayer--Vietoris sequence for reachability homology are derived from the following homotopy-theoretic version of excision, which is in turn an immediate consequence of the properties of Dwyer morphisms given in \Cref{thm:dwyer_props}.

\begin{theorem}\label{thm:excision_nerve}
    Let $i\colon A\to X$ be a long cofibration and $f\colon A\to Y$ any map of directed graphs, and consider their pushout in \(\cn{DiGraph}\):
    \begin{equation}\label{eq:excision_nerve}
\begin{tikzcd}
    A \arrow{r}{f} \arrow[hookrightarrow,swap]{d}{i} & Y \arrow{d}{j} \\
    X \arrow[swap]{r}{g} & X \cup_A Y
\end{tikzcd}
    \end{equation}
    The map \(j\) is a long cofibration, and the natural map 
    \[\Nv(X)\cup_{\Nv(A)}\Nv(Y) \to\Nv(X\cup_A Y)\]
    is a weak equivalence.
\end{theorem}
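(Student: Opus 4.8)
The plan is to reduce the statement entirely to the corresponding facts about Dwyer morphisms of preorders, which are collected in \Cref{thm:dwyer_props}. The key observation is that the functor $\Pre\colon\cn{DiGraph}\to\cn{PreOrd}$ is a left adjoint (\Cref{lem:Pre_adjoint}), hence preserves pushouts, while the inclusion $\iota\colon\cn{PreOrd}\hookrightarrow\cn{DiGraph}$ is its right adjoint and preserves limits. So the first step is to apply $\Pre$ to the pushout square \eqref{eq:excision_nerve}: since $\Pre$ preserves colimits, the resulting square
\[
\begin{tikzcd}
    \Pre(A) \arrow{r}{\Pre(f)} \arrow[hookrightarrow,swap]{d}{\Pre(i)} & \Pre(Y) \arrow{d}{\Pre(j)} \\
    \Pre(X) \arrow[swap]{r}{\Pre(g)} & \Pre(X\cup_A Y)
\end{tikzcd}
\]
is a pushout in $\cn{PreOrd}$ — but we must take a little care here, because a pushout in $\cn{PreOrd}$ need not agree with the pushout in $\cn{Cat}$, whereas \Cref{thm:dwyer_props} is stated for pushouts in $\cn{Cat}$. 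This is exactly where statement (\ref{pt:dwyer1}) of \Cref{thm:dwyer_props} is needed: since $i$ is a long cofibration, $\Pre(i)$ is a Dwyer morphism by \Cref{prop:cofib_dwyer}, so the pushout of $\Pre(i)$ along $\Pre(f)$ \emph{in $\cn{Cat}$} is already a preorder; since $\cn{PreOrd}$ is a full subcategory of $\cn{Cat}$, that categorical pushout also serves as the pushout in $\cn{PreOrd}$. Hence the square above, computed in $\cn{PreOrd}$, really is the $\cn{Cat}$-pushout of the Dwyer morphism $\Pre(i)$ along $\Pre(f)$, and \Cref{thm:dwyer_props} applies to it verbatim.

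The second step extracts the two conclusions. From statement (\ref{pt:dwyer2}) of \Cref{thm:dwyer_props}, $\Pre(j)$ is a Dwyer morphism; by \Cref{prop:cofib_dwyer} again (now in the other direction), this says precisely that $j$ is a long cofibration. From statement (\ref{pt:dwyer3}) of \Cref{thm:dwyer_props}, the natural map
\[
\Nv(\Pre(X))\cup_{\Nv(\Pre(A))}\Nv(\Pre(Y)) \to \Nv(\Pre(X\cup_A Y))
\]
is a weak equivalence of simplicial sets. Recalling our notational convention $\Nv(G)=\Nv(\Pre(G))$, this is exactly the asserted weak equivalence $\Nv(X)\cup_{\Nv(A)}\Nv(Y)\to\Nv(X\cup_A Y)$, provided one checks that the map in question is the canonical comparison map — but that is a formal matter, since $\Nv$ (being a composite of functors) preserves the colimit cone out of the span $\Nv(X)\leftarrow\Nv(A)\rightarrow\Nv(Y)$ and the comparison map is induced by the universal property.

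The main obstacle, and the one subtle point in the argument, is the $\cn{PreOrd}$-versus-$\cn{Cat}$ discrepancy for pushouts flagged above: one genuinely must invoke Thomason's result that the pushout of a Dwyer morphism of preorders, formed in $\cn{Cat}$, lands back in $\cn{PreOrd}$, in order to know that applying the left adjoint $\Pre$ to the $\cn{DiGraph}$-pushout produces the \emph{categorical} pushout that \Cref{thm:dwyer_props}(\ref{pt:dwyer3}) speaks about. Once that compatibility is pinned down, everything else is a direct translation through the equivalences $\Pre\dashv\iota$ and $\Pre\circ\iota=\mathrm{id}$ together with \Cref{prop:cofib_dwyer}; no simplicial-set computation is required beyond citing \Cref{thm:dwyer_props}.
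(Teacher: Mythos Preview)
Your argument is correct and follows essentially the same route as the paper's proof: apply the left adjoint $\Pre$ to the pushout, use \Cref{prop:cofib_dwyer} to identify $\Pre(i)$ as a Dwyer morphism, invoke part~(\ref{pt:dwyer1}) of \Cref{thm:dwyer_props} to reconcile the $\cn{PreOrd}$- and $\cn{Cat}$-pushouts, and then read off both conclusions from parts~(\ref{pt:dwyer2}) and~(\ref{pt:dwyer3}). If anything, you are slightly more explicit than the paper about the $\cn{PreOrd}$-versus-$\cn{Cat}$ subtlety, which is to the good; the remarks about $\iota$ preserving limits and $\Pre\circ\iota=\mathrm{id}$ are not actually needed, but they do no harm.
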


\begin{proof}
    The functor \(\Pre: \cn{DiGraph} \to \cn{PreOrd}\) is a left adjoint (\Cref{lem:Pre_adjoint}) and thus preserves pushouts, so upon applying \(\Pre\) to (\ref{eq:excision_nerve}) we obtain a pushout square in \(\cn{PreOrd}\) whose left leg, by \Cref{prop:cofib_dwyer}, is a Dwyer morphism.    Part (\ref{pt:dwyer1}) of \Cref{thm:dwyer_props} tells us that this pushout is preserved again upon passing from \(\cn{PreOrd}\) to \(\cn{Cat}\), and part (\ref{pt:dwyer2}) of the same theorem says that the map \(\Pre(j)\) is a Dwyer morphism.  By \Cref{prop:cofib_dwyer}, then, \(j\) must be a long cofibration. Finally, part (\ref{pt:dwyer3}) of \Cref{thm:dwyer_props} tells us that the natural map
    \[\Nv(X)\cup_{\Nv(A)}\Nv(Y) \to\Nv(X\cup_A Y)\]
    is a weak equivalence.
\end{proof}

We can now complete the proof of \Cref{thm:excision_MV_reach}.

\begin{proof}[Proof of \Cref{thm:excision_MV_reach}]
    Theorem~\ref{thm:excision_nerve} says we have
    a weak equivalence 
    $\Nv(X)\cup_{\Nv(A)}\Nv(Y) \to\Nv(X\cup_A Y)$
    and therefore, by taking normalized chains,
    a quasi-isomorphism:
    \begin{equation}\label{eq:excision_MV_reach1}
        \bpc_\ast(X)\oplus_{\bpc_\ast(A)}\bpc_\ast(Y) 
        \xrightarrow{\ \simeq\ }\bpc_\ast(X\cup_A Y)
    \end{equation}
    Here 
    $\bpc_\ast(X)\oplus_{\bpc_\ast(A)}\bpc_\ast(Y)$
    denotes the relevant pushout of chain complexes.
    This can be described explicitly as the quotient of
    $\bpc_\ast(X)\oplus\bpc_\ast(Y)$
    by the image of the map 
    $ 
        (f_\ast,-i_\ast)
        \colon
        \bpc_\ast(A)
        \to
        \bpc_\ast(X)\oplus\bpc_\ast(Y).
    $
    
    To prove excision, observe that we can now
    take the quasi-isomorphism~\eqref{eq:excision_MV_reach1} 
    and quotient by $\bpc_\ast(Y)$ to obtain the map
    $\bpc_\ast(X)/\bpc_\ast(A) 
    \to
    \bpc_\ast(X\cup_A Y)/\bpc_\ast(Y)$,
    or in other words the map
    $g_\ast \colon \bpc_\ast(X,A) \to \bpc_\ast(X\cup_A Y,Y)$,
    which is then a quasi-isomorphism by the five lemma.
    
    To prove the existence of the 
    Mayer-Vietoris sequence, observe that
    $\bpc_\ast(X)\oplus_{\bpc_\ast(A)}\bpc_\ast(Y)$
    fits into a short exact sequence of chain complexes
    \[\xymatrix{
        0
        \ar[r]
        &
        \bpc_\ast(A) 
        \ar[r]^-{(f_\ast,-i_\ast)}
        & 
        \bpc_\ast(X)\oplus\bpc_\ast(Y)
        \ar[r]
        \ar[dr]_-{g_\ast\oplus j_\ast}
        &
        \bpc_\ast(X)\oplus_{\bpc_\ast(A)}\bpc_\ast(Y)
        \ar[r]
        \ar[d]^\simeq
        &
        0
        \\
        &&&
        \bpc_\ast(X\cup_A Y)
        &
    }\]
    whose third term can be identified up to quasi-isomorphism
    using~\eqref{eq:excision_MV_reach1}.
    In the long exact sequence of homology groups
    associated to the short exact sequence,
    we may use the quasi-isomorphism to replace the homology of  
    $\bpc_\ast(X)\oplus_{\bpc_\ast(A)}\bpc_\ast(Y)$
    with that of 
    $\bpc_\ast(X\cup_A Y)$,
    giving the required result.
\end{proof}

\bibliographystyle{plain} 
\bibliography{2404_reachabilityhomology_arXiv} 

\end{document}